\newcommand{\bx}{{\bm x}}
\newcommand{\bd}{{\bm d}}
\newcommand{\be}{{\bm e}}
\newcommand{\bu}{{\bm u}}
\newcommand{\bv}{{\bm v}}
\newcommand{\by}{{\bm y}}
\newcommand{\bz}{{\bm z}}
\newcommand{\prox}{{\rm prox}}
\newtheorem{theorem}{Theorem}[section]
\newtheorem{lemma}[theorem]{Lemma}
\newtheorem{proposition}[theorem]{Proposition}
\newtheorem{corollary}[theorem]{Corollary}
\newtheorem{definition}[theorem]{Definition}
\newenvironment{proof}[1][Proof]{\begin{trivlist}
\item[\hskip \labelsep {\bfseries #1}]}{\end{trivlist}}
\title{On the Linear Convergence of the Approximate Proximal Splitting Method for Non-Smooth Convex Optimization}
\author{Mojtaba Kadkhodaie, Maziar Sanjabi and Zhi-Quan Luo}
\begin{document}
\maketitle
\begin{abstract}
Consider the problem of minimizing the sum of two convex functions, one being smooth and the other non-smooth. In this paper, we introduce a general class of approximate proximal splitting (APS) methods for solving such minimization problems. Methods in the APS class include many well-known algorithms such as the proximal splitting method (PSM), the block coordinate descent method (BCD) and the approximate gradient projection methods for smooth convex optimization. We establish the linear convergence of APS methods under a local error bound assumption. Since the latter is known to hold for compressive sensing and sparse group LASSO problems, our analysis implies the linear convergence of the BCD method for these problems without strong convexity assumption.
\end{abstract}

\section{Introduction}\label{seq:Introduction}
Consider a constrained convex minimization of the form
\begin{align}
\min_{\bx\in {X}}\quad F(\bx)=f_1(\bx)+f_2(\bx),\label{eq:main}
\end{align}
where ${X}$ is a convex closed set (can be $\mathbb{R}^n$), $f_1$ is a convex function (may be non-smooth) and $f_2$ is a smooth convex function with Lipschitz continuous gradient.
\subsection{Motivating Applications}
%In order to motivate our study, let us introduce some of the applications of such minimization problem.
Non-smooth convex optimization problems of the form \eqref{eq:main} arise in many contemporary statistical and signal processing applications including compressive sensing, signal denoising and sparse logistic regression. In the sequel, we outline some of the most recent applications of problem \eqref{eq:main}.\\
%include the $\ell_1$ regularized linear least squares problem arising from compressive sensing. In particular,  %\subsubsection{Compressive Sensing}

{\it LASSO Problem:} Suppose that we have a noisy observation vector $\bm b\in \mathbb{R}^m$ about an unknown sparse vector $\bx\in \mathbb{R}^n$, where the signal model is linear and given by
\begin{align}
{\bm b} \approx {\bm A\bx},\nonumber
\end{align}
for some given matrix $\bm A\in \mathbb{R}^{m \times n}$. One of the most popular techniques to estimate the sparse vector $ \bm x$ is called LASSO \cite{Tibshirani96}. LASSO can be viewed as an $\ell_1$-norm regularized linear least squares problem
\begin{align}
\min_{\bx\in \mathbb{R}^n} \frac{1}{2}\|{\bm A}\bx-{\bm b}\|^2+\lambda \|\bx\|_1,\label{eq:LASSO}
\end{align}
where the first term $\frac{1}{2}\|\bm A\bx-\bm b\|^2$ reduces the estimation error, and the second term $\lambda \| \bm x \|_{1}$ promotes the sparsity of the solution. The parameter $\lambda$ controls the sparsity level of the solution. The higher $\lambda$ is, the fewer non-zero entries would be in the LASSO solution. Clearly, by setting $f_2(\bm x)=\frac{1}{2}\|\bm A\bx-\bm b\|^2$, $f_1(\bm x)=\lambda \|\bx\|_1$ and ${X}=\mathbb{R}^n$, problem \eqref{eq:LASSO} becomes a special case of problem \eqref{eq:main}.

%[[[PLEASE REWRITE. DO NOT USE THE COURSE SLIDES AS THE BASIS OF YOUR WRITING. THINK OF A STRUCTURE FIRST AND THEN FILL IN THE THINGS YOU WANT TO WRITE. TAKE A LOOK AT MY OPTIMIZATION PAPERS FOR SOME TEMPLATES.]]]

%\subsubsection{Group LASSO and Image Denoising}

{\it Group LASSO Problem:} In many applications, such as image denoising, the desired solution should have the so called \emph{group sparse}
%\footnote{Suppose a vector $\bx \in  \mathbb{R}^{n}$ is written as a concatenation of blocks $\bx=[\bx_{1},...,\bx_{K}]^{'}$, where $\bx_{i}\in\mathbb{R}^{n_{i}}$ and $\sum_{i=1}^{K} n_{i}=n$. Then $\bx$ is group sparse if most of the blocks $\bx_{i}$ are zero.}
structure \cite{groupLASSO06}, i.e. the solution $\bx$ should admit a group separable structure $\bx=[\bx_{1},...,\bx_{K}]^{'}$, where $\bx_{i}\in\mathbb{R}^{n_{i}}$ and $\sum_{i=1}^{K} n_{i}=n$, with only a few non-zero representing groups. In these applications, the following optimization problem needs to be solved
\begin{align}
\min_{\bx\in \mathbb{R}^n} \frac{1}{2}\|{\bm A\bx-\bm b}\|^2+\sum_{J\in \mathcal{J}} w_{J}\|\bx_J\|, \label{eq:GLASSO}
\end{align}
where $\mathcal{J}$ is a partition of $\{1,\cdots,n\}$ and $w_{J}$ is the sparsity weight of block $J$.
Setting  $f_{2}(\bm x)=\frac{1}{2}\|{\bm A\bx-\bm b}\|^2$ and $f_{1}(\bm x) =\sum_{J\in \mathcal{J}} w_{J}\|\bx_J\|$, the Group LASSO problem \eqref{eq:GLASSO} follows the structure of problem \eqref{eq:main}.

{\it Group LASSO for Logistic Regression:} Given a set of $n$-dimensional feature vectors $\bm a_i$, $i=1,\cdots,m$, and the corresponding class labels $b_i\in\{0,1\}$, $i=1,\cdots, m$, our task is to find a linear classifier for the vectors $\bm a_{i}$. Assume the probability distribution of the class label $b$, given a feature vector $\bm a$ is given by
\begin{align}
p(b=1 |\bm a;\bx)=\frac{\exp(\bm a^T\bx)}{1+\exp(\bm a^T\bx)},\nonumber
\end{align}
where  $\bm x$ is the logistic coefficient vector.
The logistic Group LASSO problem \cite{GLASSOLogistic} can be written as
\begin{align}
\min_{\bx\in \mathbb{R}^n} \sum_{i=1}^m (\log(1+\exp(\bm a_i^T\bx))-b_i\bm a_i^T\bx)+\sum_{J\in \mathcal{J}}w_J\|\bx_J\|,\label{eq:LogReg}
\end{align}
where $w_{J}$ is the sparsity weight for the corresponding block $\bm x_{J}$. Problem \eqref{eq:LogReg} can also be interpreted as a special form of problem \eqref{eq:main}.
We refer the readers to \cite{Bach08,MA07} for further applications of Group LASSO, and to \cite{LearningKim,LearningLiu,LearningRoth,LearningBerg, Wright09,GLASSOLogistic} for further studies on Group LASSO type of techniques in statistical problems.
%%%%%%%%%%%%%%%%%%%%%%%%%%%%%%%%%%%%%%%%%%%%%%%%%%%%%%%%%%%%%%%%
\subsection{First Order Methods to Solve Convex Problem \eqref{eq:main}}
%\subsection{Approximate Gradient Projection Method}

For the large scale optimization problems of the form \eqref{eq:main}, the preferred approach is to use iterative descent algorithms along gradient related directions. Such first order methods have a long history in optimization. For example, if we assume ${X}=\mathbb{R}^n$ and $f_1(\cdot)$ to be the indicator function of a closed convex set $\bm C$, then the problem in \eqref{eq:main} turns out to be the smooth minimization of $F(\bx)=f_2(\bx)$ over the set $\bm C$. A well-known first order method to solve this problem is called Gradient Projection (GP) \cite{Rosen60,Rosen61} algorithm. In iteration $k$, the GP algorithm takes a gradient step of size $\alpha_{k}$ and then projects the point back into the feasible set $\bm C$,
\begin{align}
\bx^{k+1}=\mbox{Proj}_{\bm C} [\bx^k-\alpha_k\nabla f_2 (\bx^{k})].\label{eq:GP}
\end{align}
%$\text{Proj}_{C}[.]$, which denotes the orthogonal projection into the set $C$, operates on an arbitrary point $\bx\in\mathbb{R}^{n}$ as
%\[
%\text{Proj}_{C}[\bx]=\arg\min_{\bm y \in \mathbb{R}^{n}}\frac{1}{2}\|\bm y -\bx\|^{2}
%\]
%In general, projection to a convex set is not an easy problem. Hence, the efficiency of the GP algorithm highly depends on the simplicity of projection operator which relies on the structure of $\bm C$. For instance, if $\bm C$ is the non-negative orthant, then the projection to $\bm C$ naturally decomposes across the elements of $\bm x$, and thus is easy to compute.

The convergence analysis of the GP method has been studied before \cite{Luo93}. It has been shown that such analysis can be generalized to approximate versions of the GP method \cite{Luo93,Luo92,Luo94,Mangasarian91,Li93} which are also known as Approximate Gradient Projection (AGP) methods. In the framework of AGP, an error is allowed in the computation of gradient, as long as the size of the error vector is
 %somehow bounded%
sufficiently small (see Section \ref{section:APS}). In particular, the update of an AGP algorithm can be defined as
\begin{align}
\bx^{k+1}=\mbox{Proj}_{\bm C}[\bx^{k}-\alpha_{k}\nabla f_{2}(\bx^{k})+\be^k],\label{eq:AGPupdate}
\end{align}
where $\|\be^k\|\leq \kappa \|\bx^{k+1}-\bx^{k}\|$ for some $\kappa>0$.
It has been shown \cite{Luo93} that many well-known algorithms such as Matrix Splitting Method \cite{Pang82} and Extragradient Method \cite{Kor76} are all special cases of the AGP algorithm \eqref{eq:AGPupdate}. We will extend this approach to the non-smooth optimization \eqref{eq:main}; see Section \ref{section:APS}.

%\subsection{Proximal Splitting Method}
The analog of the GP algorithm for the general non-smooth version of problem  \eqref{eq:main} is the so called \textit{Proximal Splitting Method}  (PSM). In order to introduce this method, we first need to define the proximity operator. For any convex function $\varphi(\cdot)$ (possibly non-smooth), the Moreau-Yashida proximity operator ${\rm prox}_{\varphi}(\cdot, X):~\mathbb{R}^{n}\rightarrow \mathbb{R}^{n}$ is defined as
\begin{align}
 {\rm prox}_{\varphi} (\bx, X)=\arg\min_{\bm y\in X}\varphi(\bm y)+\frac{1}{2}\|\bm y-\bx\|^{2}. \label{eq:prox}
\end{align}
Note that since $\frac{1}{2}\|\cdot-\bx\|^{2}$ is strongly convex and $\varphi(\cdot)$ is convex, the minimizer of \eqref{eq:prox} is unique. Furthermore, if the function $\varphi$ is chosen to be the indicator function, $\iota_{C}$, of the closed convex set $C$ and $X=\mathbb{R}^{n}$, then the proximity operator is reduced to the projection operator onto the set $C$.  Thus, the proximity operator is a natural extension of the projection operator. In the sequel, we will denote the proximity operator simply by $\rm{prox}_{\varphi}(\cdot)$ and assume that its dependence on the set $X$ is understood from the context.

The proximity operator inherits many useful properties of the projection operator into convex sets. For instance, the proximity operator is known to be non-expansive and is therefore Lipschitz continuous,
\begin{align}
\|{\rm prox}_{\varphi}(\bx)-{\rm prox}_{\varphi}(\bm y)\|\leq \|\bm x-\bm y\|,~\forall~ \bx,\bm y \in \mathbb{R}^{n}.\nonumber
\end{align}
 For large scale problems, it is not always easy to compute the proximity operator, unless the function $\varphi$ has some special structure, such as separability. In those cases the proximity operator is efficiently computable (or has closed form). For instance, if the function $\varphi$ is the $\ell_{1}$-norm, the proximity operator has a closed form solution, also known as the Shrinkage operator \cite{Tseng09}.

Using the proximity operator, the optimality condition of problem \eqref{eq:main} can be formulated as
\begin{align}
\bx={\rm prox}_{\alpha f_{1}}(\bx-\alpha \nabla f_{2}(\bx)),\label{eq:optimality}
\end{align}
for some $\alpha>0$. The Proximal Splitting Method (PSM) can be viewed as an iterative approach to solve the above fixed point equation
\begin{align}
\bx^{k+1}={\rm prox}_{\alpha_{k}f_{1}}(\bx^{k}-\alpha_{k}\nabla f_{2}(\bx^{k})), \label{eq:PSM}
\end{align}
where $\alpha_{k}>0$ determines the step size at iteration $k$. Note that PSM is identical to the GP method, if $f_{1}=\iota_{C}$ for some convex closed set $C$.
%Albeit, PSM is widely used in practice, there are limited convergence analysis results for this algorithm in its general form.
It is known that if the step size $\alpha_{k}$ satisfies
\begin{align}
0<\underline \alpha\leq \alpha_{k}\leq \bar \alpha <1/L, ~k=0,1,\cdots\nonumber
\end{align}
 where $L$ is the Lipschitz constant of $\nabla f_{2}$, then every sequence generated by the PSM converges to a solution of \eqref{eq:main} (see  \cite{Combettes05}).

In spite of this convergence result, the rate of convergence for the PSM is not known, except for some specific cases. For instance, if $f_{1}=\iota_{C}$ and $f_{2}$ has a composite structure ($f_{2}(x)=h(\bm A\bx)$, where $h$ is strongly convex and $\bm A$ is an arbitrary $m\times n$ matrix), then it is proved by Luo and Tseng \cite{Luo92} that the PSM algorithm (which coincides with GP in this case) converges linearly to an optimal solution of \eqref{eq:main}. This result establishes linear convergence in the absence of strong convexity of the objective function. This result has been recently extended to the case where, $f_{1}(x)=\sum_{J\in \mathcal{J}}w_{J}\|\bx_{J}\|_{2}$ and $f_{1}(x)=\sum_{J\in \mathcal{J}}w_{J}\|\bx_{J}\|_{2}+\lambda\|\bx\|$, where $w_J$ and $\lambda$ are some nonnegative constants, and $f_{2}=h(\bm A \bx)$ is still a composite function (see \cite{Tseng09,Haibin12}).

%The proximity operator defined in \eqref{eq:prox} is based on the $\ell_{2}$-norm ($\|\cdot-\bx\|^{2}$). There is a more general definition for the proximity operator using a notion called Bregman distance instead of $\ell_{2}$-norm. The PSM method can be generalized using such proximity operator. Although many of the results in this paper can be generalized to such Proximal Splitting Methods (using Bregman distance), for the sake of brevity, we use \eqref{eq:prox} as our definition of the proximity operator throughout this paper. We refer the readers to \cite{BregmanD} for further studies on the topic.

\subsection{Our Contribution}
This paper proposes a general framework of first order methods, called \emph{approximate proximal splitting} (APS), for the nonsmooth convex optimization problem \eqref{eq:main}. This framework combines the existing framework of AGP with the proximal splitting technique, and as such, it includes the GP, AGP and proximal splitting methods as special cases. Moreover, the well known block coordinate descent (BCD) algorithm is also a special case of APS.

We analyze the convergence rate of APS class of algorithms under a local error bound condition. Our result implies the linear convergence rate of Block Coordinate Descent Method (BCD) for \eqref{eq:main} for the LASSO or group LASSO type of problems  when $f_{1}(\bx)=\sum_{J\in \mathcal{J}}w_{J}\|\bx_{J}\|_{2}$ or $f_{1}(\bx)=\sum_{J\in \mathcal{J}}w_{J}\|\bx_{J}\|_{2}+\lambda \|\bx\|_{1}$.
The BCD algorithm is one of the main algorithms used to solve large scale optimization problems due to the simplicity of its updates (especially for the LASSO or group LASSO type of problems in which each step of BCD is equivalent to a shrinkage operator \cite{Haibin12}). This linear convergence result provides theoretical proof for effectiveness of BCD in handling such problems.

Our result differs from the existing proximal splitting methods and analysis in several aspects. Among the existing work  \cite{agarwal2011fast, Haibin12, schmidt2011convergence}, the only one which considers an error term in the proximal splitting algorithm is \cite{schmidt2011convergence}, while  the other two (\cite{agarwal2011fast} and \cite{Haibin12}) are focussed on the pure proximal splitting algorithm. The result in \cite{schmidt2011convergence} does not provide the linear convergence except in the strongly convex case which is a special case of our result, so it cannot be used to establish the linear convergence of the BCD algorithm for LASSO type problems. The reason for such difference is that we use a local error bound condition in place of the strong convexity assumption.

The result in \cite{agarwal2011fast} deals with the problem of linear convergence from a statistical point of view. It assumes that problem \eqref{eq:main} comes from an $M$-estimator formulation with some probabilistic construction. It proves that the iterates will converge linearly to a neighborhood around the optimal solution, but not necessarily an optimal solution. As such, this result is probabilistic and not deterministic. This is in contrast to our result which is a general convex optimization problem  in the form \eqref{eq:main}, regardless how it is generated.
That said, by utilizing the so called restricted strong convexity and restricted smoothness (see \cite{agarwal2011fast}) instead of an error bound, the authors have established the linear convergence of the proximal splitting algorithm for a broad range optimization problems with non-smooth regularizers such as $L_1$ norm or nuclear norm.

%The rest of the paper is organized as follows. In the next Section we will briefly introduce the preliminary notions which are necessary to establish linear convergence like error bound condition. Then in Section \ref{sec:LinearCon}, we prove the linear convergence of APS methods.  Finally in Section \ref{sec:BCD}, we prove that BCD algorithm lies within APS framework and thus the linear convergence result proved in Section \ref{sec:LinearCon} holds true for it.
\section{Preliminaries}
%In this section we first provide some insight on how the PSM method can be generalized. In addition, we will introduce some basic concepts, which will be useful in our future analysis of the convergence and convergence rate of APS.\\
%In order to generalize the PSM, we need to represent it in a more convenient way.
\subsection{Proximal Gradient Vector}
Before introducing the APS method, we need to define some basic concepts, which will be useful in our future convergence analysis of the algorithm.
\begin{definition}For any $\alpha>0$, we define \textit{proximal gradient} vector as
\begin{align}
\tilde \nabla F(\bx,\alpha)=\frac{1}{\alpha}[\bx-{\rm prox}_{\alpha f_{1}}(\bx-\alpha \nabla f_{2}(\bx))].\label{eq:PSgrad}
\end{align}
When $\alpha=1$, we will use the short notation
\begin{equation}
\tilde \nabla F(\bx)=\tilde \nabla F(\bx,\alpha).
\end{equation}
\end{definition}
Note that in the special case of $f_{1}=0$ and $X=\mathbb{R}^{n}$, the proximal gradient reduces to the standard gradient, namely, $\tilde \nabla F(\bx,\alpha)=\nabla f_{2}(\bx)=\nabla F(\bx)$. In another special  case where $f_{1}=\iota_{C}$ (the indicator function of a convex set $C$), we have
\begin{equation}
\tilde \nabla F (\bx,\alpha)=\frac{1}{\alpha}[\bx-{\rm proj}_{C}(\bx-\alpha\nabla f_{2}(\bx))],
\end{equation}
which is the residual of the optimality condition for the following problem
\begin{equation}
\min_{\bx \in C} F(\bx)=f_2(\bx).
\end{equation}
Hence, $\tilde \nabla F(\bx,\alpha)$ can be viewed as a generalized notion of gradient for the constrained non-smooth minimization. In addition, it inherits many useful properties of gradient. For instance, $\tilde \nabla F(\bx^{*},\alpha)=0$ for some $\alpha>0$ iff $\bx^{*}$ is an optimal solution of \eqref{eq:main}.

The optimality condition for \eqref{eq:main} given in \eqref{eq:optimality} suggests that we can define a local measure for distance to optimality by
\begin{align}
\psi(\bx)=\| \tilde \nabla F(\bx)\|=\|\bx-\prox_{f_{1}}(\bx-\nabla f_{2}(\bx))\| \label{eq:residual}.
\end{align}
It is easy to see that $\psi (\bx)=0$ iff $\bx$ belongs to the set of optimal solutions of \eqref{eq:main}, which we denote by $X^{*}$.
%Furthermore, we will see how $\|\tilde\nabla F(\bx,\alpha)\|$ can be used to bound the distance from $x$ to the set of optimal solutions (Error Bound \ref{section:errorbound}).\\

%The following lemma provides a useful property of $\tilde \nabla F(x,\alpha)$. We will use this property in the rest of our paper.\\

\subsection{Error Bounds}\label{section:errorbound}
In this section we formally introduce the notion of error bound. As we will see it is a vital property in obtaining linear convergence rate for solving a problem via first-order methods.

For any $x\in X$, we can define
\begin{align}
\varphi (\bx)=\min_{\by\in \bar X^{*}}\|\bx-\by\|,\label{eq:Optdist}
\end{align}
where $\bar X^{*}$ is the closure of $X^{*}$ (the set of optimal solutions of \eqref{eq:main}). It is straightforward to see that $\varphi (\bx)$ can be used as a measure for distance to optimality, and $\varphi (\bx)=0$ iff $\bx\in \bar X^{*}$. However, in practice it is impossible to compute $\varphi(\bx)$, due to the requirement of knowing the set of optimal solutions, $\bar X^{*}$. This is where the error bound comes into the picture. It serves as an approximated measure of the distance to optimality. The error bound is simply a bound on $\varphi (\bx)$, based on another measure of optimality that can be computed easily (in this case, the size of the residual $\psi(\bx)$ defined by \eqref{eq:residual}).

\begin{definition}{ \bf {(Local Error Bound \cite{Luo93})}}\label{def:ErrB}
Consider the optimality distance measures defined by \eqref{eq:residual} and \eqref{eq:Optdist}. We say that problem \eqref{eq:main} satisfies the local error bound property if for every $\nu\geq \inf_{\bx\in X} F(\bx)$, there exist scalars $\delta>0$ and $\tau>0$ such that
\begin{align}
\varphi (\bx)\leq \tau \psi (\bx),\label{eq:ErrB}
\end{align}
for all $\bx\in X$ with $F(\bx)\leq \nu$ and $\psi(\bx)\leq \delta$.
\end{definition}

In other words, \eqref{eq:ErrB} says that $\varphi (\bx)$ is bounded above by the norm of the residual at $x$, whenever $F(\bx)$ is bounded above and this residual is small enough. In order to gain some insight on when \eqref{eq:ErrB} holds, consider the case where  $X=\mathbb{R}^{n}$ and $F(x)=\frac{1}{2}\bx^{T}\bm A\bx+\bm b^{T}\bx$ for some Positive definite matrix $\bm A$ and a vector $\bm b\in \mathbb{R}^{n}$. Then, \eqref{eq:ErrB} is equivalent to
\begin{align}
\varphi (\bx)\leq \tau \|\nabla f_{2}(\bx)\|=\tau \|\bm A\bx+\bm b\|\nonumber,
\end{align}
which can be easily checked to be true (using elementary linear algebra). % and it is called Hoffman error bound.\\
Furthermore, the error bound holds globally for strongly convex smooth $F(\bx)$ for any closed convex set $X$. In case when $X=\mathbb{R}^{n}$ the  strong convexity of $F(\bx)$ implies the existence a $\tau>0$ such that
\begin{align}
\|\bx-\by\|^2\leq \tau \langle \nabla F(\bx)-\nabla F(\by),\bx-\by\rangle ~\forall~\bx,\by \nonumber.
\end{align}
Let $\hat \bx$ to be a stationary point in $\bar X^{*}$ satisfying $\|\bx-\hat \bx\|=\varphi (\bx)$, then
\begin{align}
\varphi(\bx)^{2}\leq \tau \langle \nabla F(\bx),\bx-\hat \bx
\rangle \leq\tau \|\nabla F(\bx)\|\|\bx-\hat \bx\|\nonumber.
\end{align}
Proving error bound for different problems has a long history in literature. It was first considered by Demb and Tulowizki \cite{DeT84} for strongly convex quadratic functions and by Pang \cite{Pang86} in the context of Linear Complementarity Problems (LCP). In the case of smooth minimization, there are numerous error bound results under different assumptions. For instance, it has been shown that the error bound holds for strongly convex functions in \cite{Pang87}, and for quadratic functions with polyhedral constraint in \cite{Luo92b,Luo92}.

In the case of non-smooth optimization, the available error bound results are far fewer and only deal with problems with structured non-smooth parts. For instance, in the recent works \cite{Haibin12, Tseng09}, it has been proved that error bounds holds for special type of non-smooth problems (Group LASSO type of problems).
\begin{theorem}\label{theorem:errorbound} \textit {(\cite{Haibin12, Tseng09})} In problem \eqref{eq:main} let $X=\mathbb{R}^{n}$, $f_{1}(\bx)=\sum_{J\in \mathcal{J}} w_{J}\|\bx_{J}\|+\lambda\|\bx\|$ for non-negative $w_{J}$'s and $\lambda$ and $f_{2}(\bx)=h(\bm A\bx)$ for some strongly convex smooth function $h:\mathbb{R}^{m}\mapsto \mathbb{R}$ and an $m\times n$ matrix $\bm A$. In addition if the function $F$ is coercive, then error bound \eqref{eq:ErrB} holds for \eqref{eq:main}.
\end{theorem}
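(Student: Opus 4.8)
The plan is to establish \eqref{eq:ErrB} by the compactness-and-contradiction scheme of Luo and Tseng \cite{Luo93,Luo92}. Fix a level $\nu$ and suppose no pair $(\delta,\tau)$ works; choosing $\delta=\tau=j$ produces points $\bx^j\in\mathbb{R}^n$ with $F(\bx^j)\le\nu$, $0<\psi(\bx^j)\le 1/j$, and $\varphi(\bx^j)/\psi(\bx^j)>j$ (if $\psi(\bx^j)=0$ then $\bx^j\in X^*$ and $\varphi(\bx^j)=0$, impossible). Coercivity of $F$ makes $\{\bx^j\}$ bounded, so along a subsequence $\bx^j\to\bar\bx$; since $\psi$ is continuous ($\nabla f_2$ is Lipschitz and $\prox_{f_1}$ is nonexpansive) and $\psi(\bx^j)\to 0$, we get $\psi(\bar\bx)=0$, i.e.\ $\bar\bx\in X^*$. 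Put $\bar\bx^j:=\prox_{f_1}(\bx^j-\nabla f_2(\bx^j))$, so $\psi(\bx^j)=\|\bx^j-\bar\bx^j\|$, $\bar\bx^j\to\bar\bx$, and the optimality of the prox gives $s^j:=\bx^j-\bar\bx^j-\nabla f_2(\bx^j)\in\partial f_1(\bar\bx^j)$. It suffices to prove $\varphi(\bx^j)\le\tau\,\psi(\bx^j)$ for all large $j$, contradicting $\varphi(\bx^j)/\psi(\bx^j)\to\infty$.

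First I treat the ``$\bm A$-image'' component. Strong convexity of $h$ forces $\bm A\bx$ to be constant over $X^*$; write $\bar t:=\bm A\bx^*$ and $\bar g:=\nabla f_2(\bx^*)=\bm A^{\top}\nabla h(\bar t)$ for $\bx^*\in X^*$. Pairing the inclusion $s^j\in\partial f_1(\bar\bx^j)$ with $-\bar g\in\partial f_1(\bx^*)$ (taking $\bx^*$ to be the projection of $\bar\bx^j$ onto $X^*$) and using convexity of $f_1$ yields $\langle\nabla f_2(\bx^j)-\bar g,\,\bar\bx^j-\bx^*\rangle\le\langle\bx^j-\bar\bx^j,\,\bar\bx^j-\bx^*\rangle$. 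Rewriting the left-hand side as $\langle\nabla h(\bm A\bx^j)-\nabla h(\bar t),\,\bm A\bar\bx^j-\bar t\rangle$, applying strong convexity of $h$ between $\bm A\bx^j$ and $\bar t$, and using $\|\bm A\bar\bx^j-\bm A\bx^j\|\le\|\bm A\|\psi(\bx^j)$ and $\|\bar\bx^j-\bx^*\|=\varphi(\bar\bx^j)\le\varphi(\bx^j)+\psi(\bx^j)$, a Young inequality gives a constant $C$ with $\|\bm A\bx^j-\bar t\|^2\le C\big(\psi(\bx^j)^2+\psi(\bx^j)\varphi(\bx^j)\big)$, and hence (with $L_h$ a Lipschitz constant of $\nabla h$) $\|\nabla f_2(\bx^j)-\bar g\|\le\|\bm A\|\,L_h\,\|\bm A\bx^j-\bar t\|$.

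Next comes the complementary component, which is the crux. Since $\varphi(\bx^j)\le\psi(\bx^j)+\mathrm{dist}(\bar\bx^j,X^*)$, it is enough to control $\mathrm{dist}(\bar\bx^j,X^*)$. Here $X^*=\{\bx:\ \bm A\bx=\bar t\}\cap M$, where $M:=\{\bx:\,-\bar g\in\partial f_1(\bx)\}$ is the convex solution set of $\min_{\bx}\{f_1(\bx)+\langle\bar g,\bx\rangle\}$. The structural fact to exploit is that, for $f_1(\bx)=\sum_{J\in\mathcal{J}}w_J\|\bx_J\|+\lambda\|\bx\|$, the set $M$ coincides with a polyhedron in a neighborhood of $\bar\bx$: on every group $J$ with $\bar\bx_J\ne 0$ the Euclidean norm $\|\cdot_J\|$ is smooth near $\bar\bx_J$ and the optimality condition pins $\bx_J$ to a fixed ray with fixed coordinate signs; on every group with $\bar\bx_J=0$ it forces $\bx_J=0$ (or, in the degenerate case where the corresponding block of $\bar g$ attains its norm bound, confines $\bx_J$ to a fixed ray); and the $\ell_1$ term contributes only face/sign bookkeeping, so every constraint is linear or a polyhedral cone because $\bar g$ is a constant vector. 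Intersecting with $\{\bm A\bx=\bar t\}$ leaves a polyhedron, and Hoffman's error bound provides $c_3>0$ with $\mathrm{dist}(\bx,X^*)\le c_3\big(\|\bm A\bx-\bar t\|+\mathrm{dist}(-\bar g,\partial f_1(\bx))\big)$ for $\bx$ near $\bar\bx$. Applying this at $\bx=\bar\bx^j$ (legitimate for large $j$ since $\bar\bx^j\to\bar\bx$), using $\mathrm{dist}(-\bar g,\partial f_1(\bar\bx^j))\le\|s^j+\bar g\|\le\psi(\bx^j)+\|\nabla f_2(\bx^j)-\bar g\|$ together with the previous step, we reach $\varphi(\bx^j)\le c_4\,\psi(\bx^j)+c_5\sqrt{\psi(\bx^j)\varphi(\bx^j)}$; absorbing the cross term via $c_5\sqrt{\psi\varphi}\le\tfrac12\varphi+\tfrac12 c_5^2\psi$ gives $\varphi(\bx^j)\le(2c_4+c_5^2)\,\psi(\bx^j)$ for all large $j$, the desired contradiction. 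This proves \eqref{eq:ErrB} with $\tau=2c_4+c_5^2$ and $\delta$ small enough that $\psi(\bx)\le\delta$ keeps $\prox_{f_1}(\bx-\nabla f_2(\bx))$ in the Hoffman neighborhood of $\bar\bx$.

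I expect essentially all the real work to be in the third paragraph: proving rigorously that $M$ is polyhedral near $\bar\bx$ — organizing the interaction of the group-norm and $\ell_1$ subdifferentials, and in particular handling the degenerate blocks where a block of $\bar g$ exactly attains its norm bound and checking that the relevant local pattern is genuinely constant on a ball — and then invoking the correct Hoffman/Robinson-type bound for the intersection of that polyhedron with the affine subspace $\{\bm A\bx=\bar t\}$. By contrast, the compactness setup and the $\bm A$-image estimate are comparatively routine: coercivity gives boundedness, continuity of $\psi$ identifies the limit as a minimizer, and strong convexity of $h$ together with the invariance of $\bm A\bx$ over $X^*$ delivers the image-component inequality by elementary manipulation.
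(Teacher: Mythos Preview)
The paper does not prove this theorem; it is quoted from \cite{Haibin12,Tseng09} and invoked as an external result, so there is no in-paper argument to compare your proposal against. Your compactness-and-contradiction scheme is precisely the Luo--Tseng framework used in those references, and you have correctly identified the crux as showing that, near the limit point, the optimal set is (locally) polyhedral so that a Hoffman-type bound applies.

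Your self-assessment is accurate: the polyhedrality of $M=\{\bx:-\bar g\in\partial f_1(\bx)\}$ for the sparse-group regularizer is where all the substance lies, and your sketch glosses over exactly the delicate part. In particular, for $f_1(\bx)=\sum_J w_J\|\bx_J\|_2+\lambda\|\bx\|_1$ the subdifferential is a Minkowski sum of the block-$\ell_2$ and coordinate-$\ell_1$ pieces, and the ``fixed ray'' picture for an active block can fail when some coordinates of $\bar\bx_J$ vanish while the block itself does not --- the $\ell_1$ contribution there is an interval rather than a sign, and the resulting constraint set is not obviously a ray. Also, the Hoffman residual is a linear-inequality violation for an explicit polyhedral system, not the quantity $\mathrm{dist}(-\bar g,\partial f_1(\bar\bx^j))$ you wrote; bridging the two requires exhibiting the polyhedral description and bounding its residual by the prox residual. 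These are exactly the technicalities that \cite{Haibin12} works through, so your outline is pointed in the right direction but would need that case analysis to become a proof.
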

\begin{corollary}\label{coro:EB} The error bound condition \eqref{eq:ErrB} holds for LASSO problem \eqref{eq:LASSO}, Group LASSO problem \eqref{eq:GLASSO} and logistic Group LASSO problem \eqref{eq:LogReg}.
\end{corollary}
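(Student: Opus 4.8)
The plan is to verify that each of the three problems — LASSO \eqref{eq:LASSO}, Group LASSO \eqref{eq:GLASSO} and logistic Group LASSO \eqref{eq:LogReg} — is a special case of the setting covered by Theorem \ref{theorem:errorbound}, and then simply invoke that theorem. Concretely, I need to exhibit, for each problem, the decomposition $F = f_1 + f_2$ with $X = \mathbb{R}^n$, $f_1(\bx) = \sum_{J\in\mathcal{J}} w_J\|\bx_J\| + \lambda\|\bx\|$ (for suitable choices of the $w_J$'s and $\lambda$, possibly zero), and $f_2(\bx) = h(\bm A\bx)$ for a strongly convex smooth $h$ — and then check the coercivity hypothesis on $F$.

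First I would handle LASSO. Here $f_1(\bx) = \lambda\|\bx\|_1 = \sum_{j=1}^n \lambda |x_j|$, which fits the template by taking $\mathcal{J}$ to be the partition into singletons $\{j\}$, with each weight $w_{\{j\}} = \lambda$ and the extra term coefficient set to $0$ (so that $\|\bx_J\| = |x_j|$). For the smooth part, $f_2(\bx) = \tfrac12\|\bm A\bx - \bm b\|^2 = h(\bm A\bx)$ with $h(\bz) = \tfrac12\|\bz - \bm b\|^2$, which is strongly convex and smooth. Next, Group LASSO: here $f_1(\bx) = \sum_{J\in\mathcal{J}} w_J\|\bx_J\|$ is already exactly of the required form (take $\lambda = 0$), and $f_2$ is the same quadratic $h(\bm A\bx)$ as above. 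Finally, logistic Group LASSO: $f_1$ is again $\sum_{J\in\mathcal{J}} w_J\|\bx_J\|$ with $\lambda = 0$; the smooth part is $f_2(\bx) = \sum_{i=1}^m \big(\log(1 + \exp(\bm a_i^T\bx)) - b_i \bm a_i^T\bx\big)$, which can be written as $h(\bm A\bx)$ where $\bm A$ is the matrix with rows $\bm a_i^T$ and $h(\bz) = \sum_{i=1}^m \big(\log(1+\exp(z_i)) - b_i z_i\big)$; this $h$ is smooth and strongly convex (its Hessian is diagonal with entries $\tfrac{\exp(z_i)}{(1+\exp(z_i))^2}$, which are strictly positive, and on any bounded set — or after the standard observation that it suffices to work on a sublevel set — it is bounded below by a positive constant).

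The main obstacle is the coercivity requirement, since it is not automatic. For LASSO and Group LASSO, coercivity of $F$ follows because the regularizer $f_1$ already grows like a norm in directions not killed by $\bm A$ while the quadratic term handles the rest: for any direction $\bd$, if $\bm A\bd \neq 0$ then $f_2$ grows quadratically, and if $\bm A\bd = 0$ then $\lambda\|\bx\|_1$ (resp. $\sum_J w_J\|\bx_J\|$) grows linearly along $\bd$, provided all weights are positive; hence $F(\bx)\to\infty$ as $\|\bx\|\to\infty$. For the logistic case coercivity likewise needs a mild non-degeneracy of the data together with positive weights. I would therefore note that coercivity holds under the standard assumptions on these problems (positive regularization weights, and for the logistic case the usual data conditions), and under those assumptions Theorem \ref{theorem:errorbound} applies verbatim, yielding the error bound \eqref{eq:ErrB} for all three problems. $\hfill\blacksquare$
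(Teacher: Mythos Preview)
Your approach is exactly what the paper intends: the corollary is stated immediately after Theorem~\ref{theorem:errorbound} with no proof, so it is meant to follow simply by checking that each of the three problems fits the hypotheses of that theorem. Your verification of the decomposition $f_1+f_2$ in each case is correct and matches the identifications given in Section~\ref{seq:Introduction}.

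Two small remarks. First, your coercivity argument is more elaborate than needed: since $f_2\ge 0$ in all three problems (the quadratic is obviously nonnegative, and $\log(1+e^{z_i})-b_i z_i\ge 0$ for $b_i\in\{0,1\}$), coercivity of $F$ follows directly from coercivity of the regularizer $f_1$, which is immediate whenever all weights are positive because $f_1$ is then a norm. No case split on the null space of $\bm A$ is required. Second, there is a genuine technical wrinkle in the logistic case that you partially flag but do not resolve: the function $h(\bz)=\sum_i(\log(1+e^{z_i})-b_iz_i)$ is \emph{not} globally strongly convex, since its Hessian entries $e^{z_i}/(1+e^{z_i})^2$ tend to zero as $|z_i|\to\infty$. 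Theorem~\ref{theorem:errorbound} as stated in the paper asks for strong convexity of $h$, so strictly speaking the logistic case does not fall under it verbatim; one needs the more refined statements in the cited references \cite{Haibin12,Tseng09}, which only require strong convexity on compact sets (precisely the sublevel-set reduction you allude to). Your parenthetical hedge is in the right direction, but you should state clearly that this relies on the stronger results in those references rather than on Theorem~\ref{theorem:errorbound} as written.
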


We will utilize this result to establish our proof for linear convergence of APS class of methods for such problems.
\section{Approximate Proximal Splitting (APS) Method}\label{section:APS}
In this section we formally introduce the Approximate Proximal Splitting (APS) class of methods.
\begin{definition}{\bf(APS)} An algorithm is considered in the class of APS methods if it generates a sequence of iterates $\bx^{0},\bx^{1},\cdots$ in $X$ such that
\begin{align}
\bx^{r+1}={\rm prox}_{\alpha^{r} f_{1}}(\bx^{r}-\alpha^{r}(\nabla f_{2}(\bx^{r})+\be^{r})),~r=0,1,\cdots~,\label{eq:APS}
\end{align}
where $\{\alpha^{r}\}$ is a sequence of positive scalars with $\lim\inf \alpha^{r}>0$ and $\{\be^{r}\}$ is a sequence in $\mathbb{R}^{n}$ with
\begin{align}
\|\be^{r}\|\leq \kappa \|\bx^{r}-\bx^{r+1}\|\label{eq:ErrCond},
\end{align}
for some non-negative scalar $\kappa$.
\end{definition}

%\begin{remark}The choice of $\alpha^{r}$ and $e^{r}$ may depend on $x^{r}$ and may be viewed as algorithm parameters. Hence, different choices of $\alpha^{r}$ and $e^{r}$ lead into different algorithms.
%\end{remark}
%\begin{remark}
%The condition \eqref{eq:ErrCond} ensures that the algorithm does not deviate too much from the PSM.
%\end{remark}
%\begin{remark}
%Notice that APS contains PSM ($e=0$) and AGP ($f_{1}=i_{C}$ and $f_{2}$ smooth) as its special cases.
%\end{remark}
In equation \eqref{eq:APS}, $\alpha^{r}$ and $\be^{r}$ may depend on $\bx^{r}$ and can be viewed as algorithm parameters. Hence, different choices of $\alpha^{r}$ and $\be^{r}$ lead into different algorithms. For instance, the PSM algorithm whose update rule is given by \eqref{eq:PSM},  is a special case of the APS algorithm with $\bm e^{r}=\bm 0$. In fact, the condition \eqref{eq:ErrCond} ensures that the algorithm does not deviate too much from the PSM update.

For smooth minimization which is a special case of problem \eqref{eq:main} with $f_{1}=\iota_{C}$, the AGP class of algorithms is very common. Since the proximity operator reduces to the projection operator in this case, the APS algorithm \eqref{eq:AGPupdate} contains the APG method \eqref{eq:APS} as a special case. Later we will see how the Block Coordinate Decent (BCD) algorithm is also a special case of the APS method.

\section{Linear Convergence of APS}\label{sec:LinearCon}
In this section we will prove that any APS method converges linearly, if the following properties hold true.
\begin{itemize}
\item {\bf Sufficient Decrease:} There exists a constant $c_{1}>0$ such that $\forall\ r\geq0$,
\begin{align}
F(\bx^{r})-F(\bx^{r+1})\geq c_{1}\|\bx^{r+1}-\bx^{r}\|^{2}.\label{eq:SuffD}
\end{align}
\item {\bf Local Error Bound}: Definition \ref{def:ErrB}.
\item {\bf Cost-to-go:} There exists a $c_{2}>0$, such that
\begin{align}
F(\bx^{r})-F^{*}\leq c_{2}(\varphi(\bx^{r})^{2}+\|\bx^{r+1}-\bx^{r}\|^{2}),~\forall~r,\label{eq:Cost2Go}
\end{align}
where $F^{*}$ is the optimal objective value of \eqref{eq:main}.
\end{itemize}

The local error bound property solely depends on the optimization problem. Therefore the problem structure needs to be studied to ensure this property holds. In the literature, this condition has been established for certain classes of optimization problems, see \cite{Luo93}, \cite{Haibin12}, \cite{Tseng01} and references therein. Some of the existing results were summarized in Theorem \ref{theorem:errorbound}.

The section proceeds as follows. Assuming the sufficient decrease condition, we first prove that the cost-to-go will naturally follow for the APS class of algorithms. Then, the sufficient decrease is proved under some assumptions on the step size $\alpha^{r}$ and the error vector $\be^{r}$. Finally, the linear convergence rate of APS methods is shown assuming the local error bound condition of the problem.
The following lemma shows that the cost-to-go property follows from the sufficient decrease condition.

%The APS convergence will be shown if the sufficient decrease and cost-to-go properties. We will see through the next Lemma that assuming the sufficient decrease condition holds, the cost-to-go will subsequently follow. So it only remains to show the sufficient decrease condition. Since the sufficient decrease condition highly depends on the choice of $\alpha$ and $\be$ (algorithm parameters), it may not hold for the whole class of the APS algorithms. However, we will prove this property for the general case under some assumptions, see Lemma \ref{lemma: sufdec}.

\begin{lemma} \label{lm:4.1}
If an APS method satisfies the sufficient decrease condition \eqref{eq:SuffD}, then the cost-to-go condition \eqref{eq:Cost2Go} also holds.
\end{lemma}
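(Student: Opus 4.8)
The plan is to bound $F(\bx^r) - F^*$ by comparing $\bx^r$ with a nearest optimal point $\bar\bx^r \in \bar X^*$ satisfying $\|\bx^r - \bar\bx^r\| = \varphi(\bx^r)$, and then exploit the defining property of the APS update \eqref{eq:APS}, which characterizes $\bx^{r+1}$ as the minimizer of a proximal subproblem. First I would split $F = f_1 + f_2$ and write
\[
F(\bx^r) - F^* = \bigl(f_1(\bx^r) - f_1(\bar\bx^r)\bigr) + \bigl(f_2(\bx^r) - f_2(\bar\bx^r)\bigr).
\]
For the smooth part, since $\nabla f_2$ is Lipschitz with constant $L$, I would use the descent-lemma-type bound $f_2(\bx^r) - f_2(\bar\bx^r) \le \langle \nabla f_2(\bx^r), \bx^r - \bar\bx^r\rangle + \tfrac{L}{2}\|\bx^r - \bar\bx^r\|^2$ (actually the convexity inequality $f_2(\bx^r) \le f_2(\bar\bx^r) + \langle \nabla f_2(\bx^r), \bx^r - \bar\bx^r\rangle$ already suffices here). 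The term $\tfrac{L}{2}\varphi(\bx^r)^2$ that may appear is exactly of the form allowed on the right-hand side of \eqref{eq:Cost2Go}.

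For the non-smooth part, the key is the optimality condition for the prox subproblem. From \eqref{eq:APS}, $\bx^{r+1}$ minimizes $f_1(\by) + \tfrac{1}{2\alpha^r}\|\by - \bx^r + \alpha^r(\nabla f_2(\bx^r) + \be^r)\|^2$ over $\by \in X$; writing down the variational inequality gives a subgradient $\xi^{r+1} \in \partial f_1(\bx^{r+1})$ with $\xi^{r+1} = -\tfrac{1}{\alpha^r}(\bx^{r+1} - \bx^r) - \nabla f_2(\bx^r) - \be^r$ (up to the normal cone of $X$, which contributes a nonpositive term when paired with $\bar\bx^r - \bx^{r+1}$ since $\bar\bx^r \in X$). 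Then convexity of $f_1$ gives $f_1(\bx^{r+1}) - f_1(\bar\bx^r) \le \langle \xi^{r+1}, \bx^{r+1} - \bar\bx^r\rangle$, and I would also use Lipschitz continuity of $f_1$ on the relevant bounded set — or more cleanly, I would first replace $f_1(\bx^r)$ by $f_1(\bx^{r+1})$ plus a correction controlled by $\|\bx^r - \bx^{r+1}\|$, using either Lipschitz continuity of $f_1$ near the (bounded, by coercivity/level-set boundedness) iterates, or by combining with the sufficient decrease inequality \eqref{eq:SuffD}. Substituting the expression for $\xi^{r+1}$, expanding the inner products, and using $\|\be^r\| \le \kappa\|\bx^r - \bx^{r+1}\|$ from \eqref{eq:ErrCond} together with Cauchy–Schwarz, every resulting term is a constant multiple of $\|\bx^{r+1} - \bx^r\|^2$, $\varphi(\bx^r)^2$, or a cross term $\|\bx^{r+1} - \bx^r\|\,\varphi(\bx^r)$; the last is absorbed via Young's inequality $ab \le \tfrac12(a^2 + b^2)$. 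Collecting constants (which depend on $L$, $\kappa$, $\bar\alpha$, $\underline\alpha$, and the gradient bound on the level set) yields \eqref{eq:Cost2Go} with an explicit $c_2$.

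The main obstacle I anticipate is handling the $f_1(\bx^r) - f_1(\bar\bx^r)$ difference cleanly: unlike the smooth case where a single convexity/Lipschitz inequality does the job, here one needs to route everything through $\bx^{r+1}$ (the point for which we have an optimality characterization), and this forces us to control $f_1(\bx^r) - f_1(\bx^{r+1})$ by $\|\bx^r - \bx^{r+1}\|$. The sufficient decrease assumption \eqref{eq:SuffD} is precisely what lets us do this without a separate Lipschitz hypothesis on $f_1$: it gives $f_1(\bx^r) - f_1(\bx^{r+1}) \le F(\bx^r) - F(\bx^{r+1}) + \langle \nabla f_2(\bx^{r+1}), \dots\rangle$-type control, or one simply notes that $F(\bx^r) - F^* \le (F(\bx^r) - F(\bx^{r+1})) + (F(\bx^{r+1}) - F^*)$ and recurses/telescopes — but the slicker route is the direct estimate above. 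A secondary technical point is the boundedness of $\nabla f_2(\bx^r)$, which follows because the sufficient decrease makes $\{F(\bx^r)\}$ nonincreasing, so all iterates lie in the level set $\{F \le F(\bx^0)\}$, on which $\nabla f_2$ is bounded by Lipschitz continuity; this is what makes the constants uniform in $r$.
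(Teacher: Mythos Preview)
Your overall strategy is close to the paper's, but there is a real gap in the step you yourself flag as the main obstacle: controlling $f_1(\bx^r)-f_1(\bar\bx^r)$ by routing through $\bx^{r+1}$. Neither of your proposed fixes works. Sufficient decrease \eqref{eq:SuffD} gives a \emph{lower} bound $F(\bx^r)-F(\bx^{r+1})\ge c_1\|\bx^{r+1}-\bx^r\|^2$, not the upper bound you would need to add $F(\bx^r)-F(\bx^{r+1})$ back on; and invoking local Lipschitz continuity of $f_1$ on a bounded level set is an extra hypothesis the paper does not make (and fails outright when $f_1$ contains an indicator function). Your appeal to boundedness of $\nabla f_2(\bx^r)$ has the same defect: Lipschitz continuity of $\nabla f_2$ does not imply boundedness on a level set unless that level set is bounded, which is not assumed in the lemma.

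The paper's resolution is simply to bound $F(\bx^{r+1})-F^*$ rather than $F(\bx^r)-F^*$ (there is an index shift between the stated cost-to-go and what the proof actually establishes; the $r{+}1$ version is what is used in Theorem~\ref{theorem:LinCon}). Once you make that shift, your obstacle vanishes: you already have a subgradient of $f_1$ at $\bx^{r+1}$ from the prox optimality, so $f_1(\bx^{r+1})-f_1(\bar\bx^r)$ is bounded directly, and no comparison between $f_1(\bx^r)$ and $f_1(\bx^{r+1})$ is ever needed. For the smooth part the paper uses the mean value theorem, writing $f_2(\bx^{r+1})-f_2(\bar\bx^r)=\langle\nabla f_2({\bm\xi}^r),\bx^{r+1}-\bar\bx^r\rangle$ for some ${\bm\xi}^r$ on the segment and then replacing $\nabla f_2({\bm\xi}^r)$ by $\nabla f_2(\bx^r)$ at the cost of $L\|{\bm\xi}^r-\bx^r\|\,\|\bx^{r+1}-\bar\bx^r\|$; your alternative of using convexity, $f_2(\bx^{r+1})-f_2(\bar\bx^r)\le\langle\nabla f_2(\bx^{r+1}),\bx^{r+1}-\bar\bx^r\rangle$, followed by $\|\nabla f_2(\bx^{r+1})-\nabla f_2(\bx^r)\|\le L\|\bx^{r+1}-\bx^r\|$, would work just as well and avoids the MVT. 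Either way, no boundedness of $\nabla f_2$ is required. Finally, note that the paper's argument never actually invokes the sufficient decrease hypothesis---the lemma is stated with a stronger assumption than the proof needs.
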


\begin{proof}
Set $\hat {\bm x}^{r}$ to be the point in $\bar X^{*}$, such that $\varphi({\bm x}^{r})=\|{\bm x}^{r}-\hat {\bm x}^{r}\|$. The optimality condition of ${\bm x}^{r+1}$ implies
\begin{align}
&f_{1}(\hat{\bm x}^{r})+\langle \nabla f_{2}({\bm x}^{r})+{\bm e}^{r}, \hat{\bm x}^{r}-{\bm x}^{r}\rangle+\frac{1}{2\alpha^{r}}\| \hat{\bm x}^{r}-{\bm x}^{r}\|^{2}~ \ge \nonumber \\
&f_{1}({\bm x}^{r+1})+\langle \nabla f_{2}({\bm x}^{r})+{\bm e}^{r}, {\bm x}^{r+1}-{\bm x}^{r}\rangle+\frac{1}{2\alpha^{r}}\| {\bm x}^{r+1}-{\bm x}^{r}\|^{2}\nonumber
\end{align}
This implies
\begin{equation}
\langle \nabla f_{2}({\bm x}^{r}) +{\bm e}^{r}, {\bm x}^{r+1}-\hat{\bm x}^{r}\rangle+f_{1}({\bm x}^{r+1})-f_{1}(\hat{\bm x}^{r})\le  \frac{1}{2\alpha^{r}} \varphi^{2}({\bm x}^{r}).
\end{equation}
Also, the mean value theorem implies\\
\begin{equation}
f_{2}({\bm x}^{r+1})-f_{2}(\hat{\bm x}^{r}) = \langle \nabla f_{2}({ \bm \xi}^{r}), {\bm x}^{r+1}-\hat{\bm x}^{r}\rangle,
\end{equation}
for some ${ \bm \xi}^{r}$ in the line segment joining ${\bm x}^{r+1}$ and $\hat{\bm x}^{r}$. Combining the above two relations yields
\begin{align}
F({\bm x}^{r+1})-F(\hat{\bm x}^{r})&=f_{1}({\bm x}^{r+1})-f_{1}(\hat{\bm x}^{r}) +f_{2}({\bm x}^{r+1})-f_{2}(\hat{\bm x}^{r}) \nonumber \\
&=\langle \nabla f_{2}({ \bm \xi}^{r}), {\bm x}^{r+1}-\hat{\bm x}^{r}\rangle+f_{1}({\bm x}^{r+1})-f_{1}(\hat{\bm x}^{r}) \nonumber \\
&=\langle \nabla f_{2}({ \bm x}^{r}), {\bm x}^{r+1}-\hat{\bm x}^{r}\rangle+\langle \nabla f_{2}({ \bm \xi}^{r})-\nabla f_{2}({ \bm x}^{r}), {\bm x}^{r+1}-\hat{\bm x}^{r}\rangle+f_{1}({\bm x}^{r+1})-f_{1}(\hat{\bm x}^{r}) \nonumber \\
&\le \langle \nabla f_{2}({ \bm x}^{r}), {\bm x}^{r+1}-\hat{\bm x}^{r}\rangle+L\| {\bm \xi}^{r}-{\bm x}^{r} \| \| {\bm x}^{r+1}-\hat{\bm x}^{r}\|+f_{1}({\bm x}^{r+1})-f_{1}(\hat{\bm x}^{r}) \nonumber \\
&\le \frac{1}{2\alpha^{r}}\varphi^{2}({\bm x}^{r})-\langle {\bm e}^{r},{\bm x}^{r+1}-\hat{\bm x}^{r}\rangle+L\| {\bm \xi}^{r}-{\bm x}^{r} \| \| {\bm x}^{r+1}-\hat{\bm x}^{r}\| \nonumber \\
&\le \frac{1}{2\alpha^{r}}\varphi^{2}({\bm x}^{r})+L\| {\bm \xi}^{r}-{\bm x}^{r} \| \| {\bm x}^{r+1}-\hat{\bm x}^{r}\|+\kappa  \| {\bm x}^{r+1}-{\bm x}^{r} \| \| {\bm x}^{r+1}-\hat{\bm x}^{r}\| \label{eq:derC2G}
\end{align}

It remains to bound the last two terms in \eqref{eq:derC2G}. Using the fact that ${\bm \xi}^{r}$ lies in the line segment joining ${\bm x}^{r+1}$ and $\hat{\bm x}^{r}$, it follows that
\begin{align}
\| {\bm \xi}^{r}-{\bm x}^{r} \| \| {\bm x}^{r+1}-\hat{\bm x}^{r}\| &\le (\| {\bm x}^{r+1}-{\bm x}^{r} \|+\| {\bm x}^{r}-\hat{\bm x}^{r} \|)(\| {\bm x}^{r+1}-{\bm x}^{r} \| +\| {\bm x}^{r}-\hat{\bm x}^{r} \|) \nonumber \\
&=(\| {\bm x}^{r+1}-{\bm x}^{r} \|+\varphi({\bm x}^{r}))(\| {\bm x}^{r+1}-{\bm x}^{r} \| +\varphi({\bm x}^{r})) \nonumber \\
&\le 2(\| {\bm x}^{r+1}-{\bm x}^{r} \|^{2}+\varphi^2({\bm x}^{r})).\nonumber
\end{align}
For the last term in \eqref{eq:derC2G} we have,
\begin{align}
\| {\bm x}^{r+1}-{\bm x}^{r} \| \| {\bm x}^{r+1}-\hat{\bm x}^{r}\| &\le \| {\bm x}^{r+1}-{\bm x}^{r} \| (\| {\bm x}^{r+1}-{\bm x}^{r} \| +\varphi({\bm x}^{r})))  \nonumber \\
&\le2\| {\bm x}^{r+1}-{\bm x}^{r} \| ^{2}+2\varphi^{2}(\bm x^{r}).\nonumber
\end{align}
%where we have used the fact that\\
%\[
%\varphi({\bm x}^{r})=\min_{{\bm x} \in \bf X^{\ast}} \| {\bm x}-{\bm x}^{r} \|\le \| {\bm x}^{r+1}-{\bm x}^{r} \|.
%\]
Substituting these upper bounds into the right hand side of inequality \eqref{eq:derC2G} yields
\begin{align}
F({\bm x}^{r+1})-F(\hat{\bm x}^{r})={\textit O}(\varphi^{2}({\bm x}^{r})+\| {\bm x}^{r+1}-{\bm x}^{r} \|^{2}).
\end{align}
This proves the desired result.
\end{proof}

%The local error bound depends on the problem, and is independent of the algorithm. Hence, it suffices to prove the sufficient decrease property. But the sufficient decrease condition highly depends on the choice of $\alpha$ and $\be$ (parameters of the algorithm). Thus, it is might not be true for the whole class of the APS algorithms although the following result holds for the general case.\\
The following result establishes the sufficient decrease condition for the APS algorithm under some conditions on the error sequence $\be^{r}$ and the step size sequence $\alpha^{r}$.

\begin{lemma} \label{lemma: sufdec} Consider an APS algorithm defined by \eqref{eq:APS}-\eqref{eq:ErrCond} for some $\kappa>0$ and some stepsizes $\alpha^r$ satisfying
\begin{equation}\label{eq:stepsize}
0<\underline \alpha\leq \alpha^{r}\leq \bar \alpha <\frac{2}{L+2\kappa},\quad \mbox{for some $\underline \alpha$ and $\bar\alpha$, $\forall \ r$.}
\end{equation}
then the sufficient decrease property \eqref{eq:SuffD} holds.
\end{lemma}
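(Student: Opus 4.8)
The plan is to combine three ingredients: the variational inequality characterizing the proximal update \eqref{eq:APS}, the descent lemma for the smooth part $f_2$, and the error control \eqref{eq:ErrCond}. First I would write down the optimality condition for $\bx^{r+1}$ viewed as the unique minimizer of the strongly convex problem $\min_{\by\in X}\ f_1(\by)+\frac{1}{2\alpha^r}\|\by-\bx^r+\alpha^r(\nabla f_2(\bx^r)+\be^r)\|^2$. Using convexity of $f_1$, the resulting first-order inequality holds at every feasible $\by\in X$; evaluating it at the admissible point $\by=\bx^r$ and expanding the squared norm, the terms $\pm\tfrac{\alpha^r}{2}\|\nabla f_2(\bx^r)+\be^r\|^2$ and $\pm\tfrac{1}{2\alpha^r}\|\bx^{r+1}-\bx^r\|^2$ collect to give, after dividing by $\alpha^r$,
\[
f_1(\bx^r)-f_1(\bx^{r+1})\ \ge\ \tfrac{1}{\alpha^r}\|\bx^{r+1}-\bx^r\|^2+\langle \nabla f_2(\bx^r)+\be^r,\ \bx^{r+1}-\bx^r\rangle .
\]

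Next I would invoke the descent lemma for $f_2$ (valid because $\nabla f_2$ is $L$-Lipschitz): $f_2(\bx^{r+1})\le f_2(\bx^r)+\langle \nabla f_2(\bx^r),\bx^{r+1}-\bx^r\rangle+\tfrac{L}{2}\|\bx^{r+1}-\bx^r\|^2$. Adding this to the previous display, the cross term $\langle \nabla f_2(\bx^r),\bx^{r+1}-\bx^r\rangle$ cancels and one is left with
\[
F(\bx^r)-F(\bx^{r+1})\ \ge\ \Bigl(\tfrac{1}{\alpha^r}-\tfrac{L}{2}\Bigr)\|\bx^{r+1}-\bx^r\|^2+\langle \be^r,\ \bx^{r+1}-\bx^r\rangle .
\]
Then I would bound the residual inner product from below by Cauchy--Schwarz together with \eqref{eq:ErrCond}: $\langle \be^r,\bx^{r+1}-\bx^r\rangle\ge-\|\be^r\|\,\|\bx^{r+1}-\bx^r\|\ge-\kappa\|\bx^{r+1}-\bx^r\|^2$. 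This yields $F(\bx^r)-F(\bx^{r+1})\ge\bigl(\tfrac{1}{\alpha^r}-\tfrac{L}{2}-\kappa\bigr)\|\bx^{r+1}-\bx^r\|^2\ge\bigl(\tfrac{1}{\bar\alpha}-\tfrac{L}{2}-\kappa\bigr)\|\bx^{r+1}-\bx^r\|^2$, and the hypothesis $\bar\alpha<2/(L+2\kappa)$ makes the constant $c_1:=\tfrac{1}{\bar\alpha}-\tfrac{L+2\kappa}{2}$ strictly positive, which is exactly \eqref{eq:SuffD}.

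The only genuinely delicate step is deriving the proximal optimality inequality cleanly — keeping the signs straight when expanding $\|\by-\bx^r+\alpha^r(\nabla f_2(\bx^r)+\be^r)\|^2$ at $\by=\bx^r$, and making sure the leftover $\langle\be^r,\cdot\rangle$ term has a sign that can be absorbed after the descent-lemma estimate is added. Everything else is routine bookkeeping. It is worth noting that the factor $2\kappa$ (rather than $\kappa$) in the step-size threshold is precisely what is needed to dominate this error term, so the stated bound is natural; when $\kappa=0$ it recovers the classical $\bar\alpha<2/L$ condition for the exact proximal splitting method.
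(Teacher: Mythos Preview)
Your proposal is correct and follows essentially the same route as the paper: both combine the first-order optimality condition of the proximal subproblem (together with the subgradient inequality for $f_1$) with the descent lemma for $f_2$, specialize at $\by=\bx^r$, and then absorb the $\langle \be^r,\bx^{r+1}-\bx^r\rangle$ term via Cauchy--Schwarz and \eqref{eq:ErrCond}, arriving at the identical constant $c_1=\tfrac{1}{\bar\alpha}-\tfrac{L+2\kappa}{2}=\tfrac{2-\bar\alpha(L+2\kappa)}{2\bar\alpha}>0$. The only cosmetic difference is that the paper first derives the inequality for a generic $\by\in X$ (invoking convexity of $f_2$, which is superfluous once one sets $\by=\bx^r$) and then specializes, whereas you go straight to $\by=\bx^r$.
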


\begin{proof}
By the optimality condition for $\bx^{r+1}$, there exists a $\bm g\in \partial f_{1}(\bx^{r+1})$ such that
\begin{align}
\langle\alpha^{r}\bm g+\alpha^{r}\nabla f_{2}(\bx^{r})+\alpha^r\be^{r}+\bx^{r+1}-\bx^{r},\by-\bx^{r+1}\rangle\geq0,~\forall\ \by\in X.\nonumber
\end{align}
Moreover, the convexity of $f_{1}$ implies that there is some $\bm g\in \partial f_{1}(\bx^{r+1})$ which satisfies the following inequality
\begin{align}
f_{1}(\by)-\langle \bm g, \by-\bx^{r+1}\rangle \geq f_{1}(\bx^{r+1}).\nonumber
\end{align}
Using the above two relations and convexity of $f_{2}$, we obtain
\begin{align}
F({\bm y})&\ge f_{1}({\bm y})+f_{2}({\bm x^{r}})+\langle \nabla f_{2}({\bm x^{r}}),{\bm y}-{\bm x^{r}}\rangle  \nonumber \\
&= f_{1}({\bm y})+f_{2}({\bm x^{r}})+\langle \nabla f_{2}({\bm x}),{\bm x^{r+1}}-{\bm x}^{r}\rangle+\langle \nabla f_{2}({\bm x^{r}}),{\bm y}-{\bm x^{r+1}}\rangle \nonumber \\
&\geq f_{1}({\bm y})+f_{2}({\bm x^{r}})+\langle \nabla f_{2}({\bm x^{r}}),{\bm x^{r+1}}-{\bm x^{r}}\rangle-\langle\be^{r}+\frac{1}{\alpha^r}(\bx^{r+1}-\bx^{r})+\bm g,{\bm y}-{\bm x^{r+1}}\rangle \nonumber \\
&\geq [f_{1}({\bm x}^{r+1})+\langle {\bm g},{\bm y}-{\bm x}^{r+1} \rangle]+f_{2}({\bm x^{r}})+\langle \nabla f_{2}({\bm x^{r}}),{\bm x^{r+1}}-{\bm x^{r}}\rangle \nonumber \\
&-\langle\be^{r}+\frac{1}{\alpha^r}(\bx^{r+1}-\bx^{r})+\bm g,{\bm y}-{\bm x^{r+1}}\rangle  \nonumber \\
&= f_{1}({\bm x^{r+1}})+f_{2}({\bm x^{r}})+\langle \nabla f_{2}({\bm x^{r}}),{\bm x^{r+1}}-{\bm x^{r}}\rangle-\langle\be^{r}+\frac{1}{\alpha^r}(\bx^{r+1}-\bx^{r})+\bm g,{\bm y}-{\bm x^{r+1}}\rangle . \nonumber
\end{align}
Now the Lipschitz continuity of $\nabla f_{2}$ and Taylor expansion of $f_{2}$ imply that
\begin{align}
F(\by)\geq &  f_{1}({\bm x^{r+1}})+f_{2}({\bm x^{r}})+\langle \nabla f_{2}({\bm x^{r}}),{\bm x^{r+1}}-{\bm x^{r}}\rangle-\langle\be^{r}+\frac{1}{\alpha^r}(\bx^{r+1}-\bx^{r})+\bm g,{\bm y}-{\bm x^{r+1}}\rangle\nonumber\\
&\geq f_{1}({\bm x^{r+1}})+f_{2}({\bm x^{r+1}})-\frac{L}{2}\|\bx^{r+1}-\bx^{r}\|^{2}-\langle\be^{r}+\frac{1}{\alpha^r}(\bx^{r+1}-\bx^{r})+\bm g,{\bm y}-{\bm x^{r+1}}\rangle\nonumber\\
&= F(\bx^{r+1})-\frac{L}{2}\|\bx^{r+1}-\bx^{r}\|^{2}-\langle\be^{r}+\frac{1}{\alpha^r}(\bx^{r+1}-\bx^{r})+\bm g,{\bm y}-{\bm x^{r+1}}\rangle.\nonumber
\end{align}
Specializing $\by=\bx^{r}$, and using the Cauchy-Schwartz inequality, we get
\begin{align}
F(\bx^{r})-F(\bx^{r+1})\geq \frac{2-2\alpha^r\kappa-\alpha^r L}{2\alpha^{r}}\|\bx^{r+1}-\bx^{r}\|^2.\nonumber
\end{align}
Moreover, we know that
\[
\frac{2-2\alpha^r\kappa-\alpha^r L}{2\alpha^{r}}\geq \frac{2-2\bar\alpha\kappa-\bar\alpha L}{2\bar\alpha}>0, \;\;\forall\ r,
\]
which further implies the desired result.
\end{proof}

Finally, we state the following lemma which is needed to prove the linear convergence of the APS method.
Its proof is relegated to the Appendix.

\begin{lemma}\label{lemma:lemma1}
%(cf. Lemma 1 in \cite{gafni1984two})
% For any $\alpha>0$, $\|\tilde \nabla F(\bx,\alpha)\|$ is monotonically decreasing with $\alpha$, and the norm of $\alpha \tilde \nabla F(\bx,\alpha)$ is monotonically increasing with $\alpha$.
For $\alpha >0$, we have
\begin{enumerate}
\item The function $\alpha\|\tilde{\nabla}f(\bx,\alpha)\|$ is monotonically increasing with $\alpha$.
\item The function $\|\tilde{\nabla}f(\bx,\alpha)\|$ is monotonically decreasing with $\alpha$.
\end{enumerate}
\end{lemma}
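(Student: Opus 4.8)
The plan is to express the proximal gradient vector $\tilde\nabla F(\bx,\alpha)$ through the minimization problem that defines the proximity operator, and then exploit the optimality (variational) inequalities at two different step sizes $\alpha_1 < \alpha_2$. Write $\bu_i = \prox_{\alpha_i f_1}(\bx - \alpha_i \nabla f_2(\bx))$ for $i=1,2$, so that $\alpha_i \tilde\nabla F(\bx,\alpha_i) = \bx - \bu_i$. Since $\bu_i$ minimizes $f_1(\by) + \tfrac{1}{2\alpha_i}\|\by - (\bx - \alpha_i\nabla f_2(\bx))\|^2$ over $\by\in X$, the first-order optimality condition gives, for some subgradient $\bm g_i \in \partial f_1(\bu_i)$,
\begin{align}
\langle \alpha_i \bm g_i + \alpha_i \nabla f_2(\bx) + \bu_i - \bx,\ \by - \bu_i\rangle \ge 0,\quad \forall\ \by\in X.\nonumber
\end{align}
First I would plug $\by = \bu_2$ into the inequality for $i=1$ and $\by = \bu_1$ into the inequality for $i=2$, obtaining two inequalities involving the cross term $\langle \bm g_1 - \bm g_2,\ \bu_1 - \bu_2\rangle$.

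The key device is the monotonicity of the subdifferential: $\langle \bm g_1 - \bm g_2,\ \bu_1 - \bu_2\rangle \ge 0$ because $f_1$ is convex. Adding the two variational inequalities (after scaling the $i$-th one by a suitable positive factor so that the $\nabla f_2(\bx)$ terms can be combined) and using this monotonicity should leave an inequality purely in terms of $\bu_1 - \bu_2$, $\bx - \bu_1$, $\bx - \bu_2$, $\alpha_1$, and $\alpha_2$. Concretely, I expect to derive that $\langle (\bx - \bu_1) - (\bx - \bu_2),\ \text{something}\rangle$ has a sign that forces $\|\bx - \bu_1\| \le \|\bx - \bu_2\|$ when $\alpha_1 \le \alpha_2$ — this is precisely statement (1), since $\alpha_i\|\tilde\nabla F(\bx,\alpha_i)\| = \|\bx - \bu_i\|$. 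For statement (2), the same pair of inequalities should be recombined (this time isolating $\tfrac{1}{\alpha_i}(\bx - \bu_i) = \tilde\nabla F(\bx,\alpha_i)$ rather than $\bx-\bu_i$) to show $\|\tilde\nabla F(\bx,\alpha_1)\| \ge \|\tilde\nabla F(\bx,\alpha_2)\|$.

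An alternative, possibly cleaner route for part (1): rescale variables. Setting $\by = \bx + \alpha \bm w$, the map $\bm w \mapsto \alpha \tilde\nabla F(\bx,\alpha)$ can be recognized as $-$(the minimizer defining a proximal step), and one checks that $\tfrac12\|\bx-\bu(\alpha)\|^2$ equals (up to the additive constant $f_1$ and linear terms) a value function that is concave and nondecreasing in a parameter built from $\alpha$; monotonicity then follows from a standard envelope/monotone-value-function argument. Part (2) can be deduced from part (1) together with the three-point identity relating $\|\tilde\nabla F(\bx,\alpha)\|$ for nearby $\alpha$'s, or by a direct convexity argument on the Moreau envelope $M_\alpha(\bz) = \min_\by f_1(\by) + \tfrac1{2\alpha}\|\by-\bz\|^2$, whose gradient in $\bz$ is exactly $\tilde\nabla F$-like and whose dependence on $\alpha$ is well understood.

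The main obstacle I anticipate is bookkeeping the two subgradients $\bm g_1,\bm g_2$ correctly: one must be careful that the optimality inequalities hold for the \emph{same} subgradient that appears in the monotonicity inequality, and that the constraint set $X$ does not interfere (the variational inequality restricted to $\by\in X$ must still yield the needed scalar inequality after the substitution $\by = \bu_{3-i}$, which is legitimate since each $\bu_i \in X$). Once the correct linear combination of the two variational inequalities is formed and $\langle \bm g_1 - \bm g_2, \bu_1-\bu_2\rangle \ge 0$ is invoked, both monotonicity claims should drop out after elementary manipulation; no delicate estimates are needed beyond Cauchy–Schwarz.
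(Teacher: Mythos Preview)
Your plan is sound and will succeed. For Part~(2) it coincides with the paper's argument: after the change of variable $\bz=\by-\bx$ and then $\bu=\tfrac{1}{\alpha}\bz$, the paper writes the first-order optimality condition as $\bv_i+\bu^*(\alpha_i)=0$ with $\bv_i\in\partial g(\alpha_i\bu^*(\alpha_i))$ (here $g(\bz)=f_1(\bx+\bz)+\bz^T\nabla f_2(\bx)$), invokes monotonicity of $\partial g$ to get $\langle \alpha_1\bu^*(\alpha_1)-\alpha_2\bu^*(\alpha_2),\,\bu^*(\alpha_2)-\bu^*(\alpha_1)\rangle\ge0$, and expands to conclude $\|\bu^*(\alpha_2)\|\le\|\bu^*(\alpha_1)\|$. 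For Part~(1), however, the paper takes a different and more elementary route: it writes $\alpha h(\alpha)=\|\bz^*(\alpha)\|$ with $\bz^*(\alpha)=\arg\min_{\bz}\{g(\bz)+\tfrac{1}{2\alpha}\|\bz\|^2\}$ and simply compares \emph{objective values} at $\bz^*(\alpha_1)$ and $\bz^*(\alpha_2)$ (an exchange argument); adding the two inequalities gives $\|\bz^*(\alpha_1)\|\le\|\bz^*(\alpha_2)\|$ directly, and the paper explicitly remarks that convexity of $f_1$ is \emph{not} used in this step. Your unified variational-inequality approach handles both parts at once --- the scaled sum of your two inequalities together with $\langle \bm g_1-\bm g_2,\bu_1-\bu_2\rangle\ge0$ yields $\bigl(\tfrac{1}{\alpha_1}+\tfrac{1}{\alpha_2}\bigr)\langle \bx-\bu_1,\bx-\bu_2\rangle\ge \tfrac{1}{\alpha_1}\|\bx-\bu_1\|^2+\tfrac{1}{\alpha_2}\|\bx-\bu_2\|^2$, from which both monotonicities follow after Cauchy--Schwarz --- but it spends convexity on Part~(1) where the paper's value-comparison argument does not need it.
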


%\begin{proof}
%{\color{red}The proof is not obvious to us and we could not find the proof anywhere !!!!}
%\end{proof}
%%\vspace{3 mm}
Now we are ready to state and prove the linear convergence of APS class of algorithms.

\begin{theorem}\label{theorem:LinCon}
Assume that the error bound property holds for problem \eqref{eq:main}. Consider an APS algorithm
defined by \eqref{eq:APS}-\eqref{eq:ErrCond} for some $\kappa>0$ and some stepsizes $\alpha^r$ satisfying the sufficient decrease condition \eqref{eq:SuffD}.
%$$
%0<\underline \alpha\leq \alpha^{r}\leq \bar \alpha <\frac{2}{L+2\kappa},\quad \mbox{for some $\underline \alpha$ and $\bar\alpha$, and for all  $r$.}
%$$
Then the sequence of iterates generated by the algorithm converges R-linearly to an optimal solution of \eqref{eq:main}.
\end{theorem}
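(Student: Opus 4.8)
The plan is to follow the classical Luo--Tseng template: bound the per-iteration objective decrease from below by a constant multiple of the optimality gap itself, which forces the gap to zero Q-linearly, and then pass from objective values to iterates. Write $\Delta^{r}:=F(\bx^{r})-F^{*}\ge 0$. By the sufficient decrease condition \eqref{eq:SuffD}, $\{F(\bx^{r})\}$ is non-increasing and bounded below by $F^{*}$, hence convergent; telescoping \eqref{eq:SuffD} gives $\sum_{r}\|\bx^{r+1}-\bx^{r}\|^{2}<\infty$, so in particular $\|\bx^{r+1}-\bx^{r}\|\to 0$. Moreover Lemma~\ref{lm:4.1} supplies the cost-to-go bound \eqref{eq:Cost2Go}. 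The two remaining ingredients are (i) an inequality of the form $\psi(\bx^{r})\le c_{3}\|\bx^{r+1}-\bx^{r}\|$ and (ii) the local error bound \eqref{eq:ErrB}.

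I expect step (i) to be the main obstacle. The point is that $\bx^{r+1}$ is the prox step taken with the \emph{perturbed} gradient $\nabla f_{2}(\bx^{r})+\be^{r}$ at step size $\alpha^{r}$, whereas $\tilde\nabla F(\bx^{r},\alpha^{r})$ is defined via the prox step with the \emph{exact} gradient. Applying the non-expansiveness of $\prox_{\alpha^{r}f_{1}}$ to these two points and using \eqref{eq:ErrCond} gives $\bigl\|\alpha^{r}\tilde\nabla F(\bx^{r},\alpha^{r})-(\bx^{r}-\bx^{r+1})\bigr\|\le\alpha^{r}\|\be^{r}\|\le\alpha^{r}\kappa\|\bx^{r+1}-\bx^{r}\|$, hence $\alpha^{r}\|\tilde\nabla F(\bx^{r},\alpha^{r})\|\le(1+\alpha^{r}\kappa)\|\bx^{r+1}-\bx^{r}\|$. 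To convert this into a bound on $\psi(\bx^{r})=\|\tilde\nabla F(\bx^{r},1)\|$ I would invoke Lemma~\ref{lemma:lemma1}: if $\alpha^{r}\le 1$, the monotone decrease of $\alpha\mapsto\|\tilde\nabla F(\bx^{r},\alpha)\|$ gives $\psi(\bx^{r})\le\|\tilde\nabla F(\bx^{r},\alpha^{r})\|\le(\tfrac1{\underline\alpha}+\kappa)\|\bx^{r+1}-\bx^{r}\|$; if $\alpha^{r}\ge 1$, the monotone increase of $\alpha\mapsto\alpha\|\tilde\nabla F(\bx^{r},\alpha)\|$ gives $\psi(\bx^{r})\le\alpha^{r}\|\tilde\nabla F(\bx^{r},\alpha^{r})\|\le(1+\bar\alpha\kappa)\|\bx^{r+1}-\bx^{r}\|$, where $0<\underline\alpha\le\alpha^{r}\le\bar\alpha$ follows from $\liminf\alpha^{r}>0$ together with the step-size restriction implicit in sufficient decrease. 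Either way $\psi(\bx^{r})\le c_{3}\|\bx^{r+1}-\bx^{r}\|$ for a fixed constant $c_{3}$.

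Now I would combine the pieces. Since $\|\bx^{r+1}-\bx^{r}\|\to 0$, step (i) gives $\psi(\bx^{r})\to 0$; and $F(\bx^{r})\le F(\bx^{0})=:\nu$ is bounded, so for all $r$ large enough the local error bound \eqref{eq:ErrB} applies and yields $\varphi(\bx^{r})\le\tau\psi(\bx^{r})\le\tau c_{3}\|\bx^{r+1}-\bx^{r}\|$. Substituting into the cost-to-go bound \eqref{eq:Cost2Go} gives $\Delta^{r}\le c_{2}(\varphi(\bx^{r})^{2}+\|\bx^{r+1}-\bx^{r}\|^{2})\le c_{4}\|\bx^{r+1}-\bx^{r}\|^{2}$ with $c_{4}:=c_{2}(\tau^{2}c_{3}^{2}+1)$. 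Coupling this with \eqref{eq:SuffD} yields $\Delta^{r}-\Delta^{r+1}=F(\bx^{r})-F(\bx^{r+1})\ge c_{1}\|\bx^{r+1}-\bx^{r}\|^{2}\ge(c_{1}/c_{4})\Delta^{r}$, i.e. $\Delta^{r+1}\le\rho\,\Delta^{r}$ with $\rho:=1-c_{1}/c_{4}\in[0,1)$; hence $\Delta^{r}\to 0$ Q-linearly.

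Finally, to upgrade to R-linear convergence of the iterates, use sufficient decrease once more: $\|\bx^{r+1}-\bx^{r}\|^{2}\le\tfrac1{c_{1}}(\Delta^{r}-\Delta^{r+1})\le\tfrac1{c_{1}}\Delta^{r}=O(\rho^{r})$, so $\|\bx^{r+1}-\bx^{r}\|=O(\rho^{r/2})$, which is summable. Therefore $\{\bx^{r}\}$ is Cauchy and converges to some $\bx^{\infty}$; since $\psi$ is continuous and $\psi(\bx^{r})\to 0$, we get $\psi(\bx^{\infty})=0$, i.e. $\bx^{\infty}\in X^{*}$; and $\|\bx^{r}-\bx^{\infty}\|\le\sum_{j\ge r}\|\bx^{j+1}-\bx^{j}\|=O(\rho^{r/2})$, which is precisely R-linear convergence to an optimal solution of \eqref{eq:main}. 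Beyond step (i), the only subtlety is that the error bound is invoked only once $\psi(\bx^{r})\le\delta$, which, $\psi(\bx^{r})\to 0$ being already established, affects only finitely many terms and hence not the asymptotic rate.
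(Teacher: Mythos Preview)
Your proof is correct and follows essentially the same Luo--Tseng template as the paper: bound $\psi(\bx^{r})$ by $\|\bx^{r+1}-\bx^{r}\|$ via non-expansiveness of the prox and Lemma~\ref{lemma:lemma1}, feed this into the local error bound and then the cost-to-go estimate, and close the loop with sufficient decrease to obtain Q-linear decay of $F(\bx^{r})-F^{*}$ and hence R-linear convergence of the iterates. The only cosmetic differences are that the paper packages your case split $\alpha^{r}\lessgtr 1$ into a single factor $1/\min\{1,\underline\alpha\}$, and it indexes the cost-to-go bound at $F(\bx^{r+1})-F^{*}$ rather than $F(\bx^{r})-F^{*}$ (leading to a contraction factor $C/(1+C)$ instead of your $1-c_{1}/c_{4}$); both routes are equivalent. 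One small caveat: the existence of an upper bound $\bar\alpha$ on the stepsizes does not literally follow from sufficient decrease alone, but the paper's own proof tacitly assumes it as well, so you are in good company.
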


\begin{proof}
%%%%%%%%%%%%%%%%%%%%%%%%%%%%%%%%%%%%%%%%%%%%%%%
%Now, we need to use the three established steps (\ref{suffdec_def}),(\ref{localerr}) and (\ref{cost2go}), to get the linear rate of convergence:
%By Lemma~\ref{lemma: sufdec}, the sufficient decrease condition \eqref{eq:SuffD} holds, which further implies
The sufficient decrease condition \eqref{eq:SuffD} implies
\begin{equation}%\label{limit0}
\| {\bm x}^{r+1}-{\bm x}^{r} \|^{2} \to 0 .\nonumber
\end{equation}

Moreover, we have
\begin{align}
%\tiny
\| {\bm x}^{r}-{\rm prox}_{\alpha^{r}f_{1}}&[{\bm x}^{r}-\alpha^{r}\nabla f_{2}({\bm x}^{r})]\| \nonumber \\
&\le  \| {\bm x}^{r}-{\bm x}^{r+1}\| + \| {\bm x}^{r+1}-{\rm prox}_{\alpha^{r}f_{1}}[{\bm x}^{r}-\alpha^{r}\nabla f_{2}({\bm x}^{r})]\| \nonumber \\
&\le \| {\bm x}^{r}-{\bm x}^{r+1}\| + \alpha^r\| {\bm e}^{r}\| \nonumber \\
&\le (\bar\alpha\kappa +1)\| {\bm x}^{r}-{\bm x}^{r+1}\|, \nonumber
\end{align}
where the second inequality is due to the non-expansiveness property of the proximity operator.

Since $\alpha^{r}\ge \underline \alpha$ for all $r$, we obtain from Lemma~\ref{lemma:lemma1} that
\begin{align}
\psi(\bx^{r})&=\| {\bm x}^{r}-{\rm prox}_{f_{1}}[{\bm x}^{r}-\nabla f_{2}({\bm x}^{r})]\| \nonumber \\
& \le \frac{1}{\min\{1,\underline\alpha\}} \| {\bm x}^{r}-{\rm prox}_{\alpha^{r} f_{1} }[{\bm x}^{r}-\alpha^{r}\nabla f_{2}({\bm x}^{r})]\| \nonumber \\
& \le  \frac{1}{\min\{1,\underline\alpha\}} \| {\bm x}^{r}-{\bm x}^{r+1}\| \rightarrow 0\nonumber.
\end{align}
 Hence,
\begin{align}
%\psi(\bx^{r}) = \| {\bm x}^{r}-{\rm prox}_{\alpha^{r}f_{1}}[{\bm x}^{r}-\alpha^{r}\nabla f_{2}({\bm x}^{r})]\| \rightarrow 0.\nonumber
\psi(\bx^{r}) \to 0. \nonumber
\end{align}
Using the local error bound condition, for sufficiently large $r$, there exists a constant $\tau$ such that
\begin{align}
\varphi({\bm x}^{r})\leq \tau \psi(\bx^{r})\rightarrow 0,
\end{align}
which further implies $\varphi(\bx^{r})\to 0$. Notice that by Lemma~\ref{lm:4.1}, the cost-to-go estimate \eqref{eq:Cost2Go} holds, which together with the fact that $\varphi(\bx^{r})\to 0$ further implies
\[
F({\bm x}^{r})\to F^{\ast}.
\]
Now we use the local error bound condition together with the cost-to-go estimate to obtain
\begin{align}
F({\bm x}^{r+1})-F^{\ast} &\le c_{2}\,(\varphi^2({\bm x}^{r})+\| {\bm x}^{r}-{\bm x}^{r+1}\|^{2})\nonumber \\
&\le c_{2}\,({\tau}\| {\bm x}^{r}-{\rm prox}_{f_{1}}[{\bm x}^{r}-\nabla f_{2}({\bm x}^{r})]\|^{2}+ \| {\bm x}^{r}-{\bm x}^{r+1}\|^{2})\nonumber \\
&\le \frac{c_{2}\tau}{\min\{ 1, {\underline\alpha}^{2}\}}(\| {\bm x}^{r}-{\rm prox}_{\alpha^{r}f_{1}}[{\bm x}^{r}-\alpha^{r}\nabla f_{2}({\bm x}^{r})]\|^{2})+c_{2}\| {\bm x}^{r}-{\bm x}^{r+1}\|^{2}.\nonumber
\end{align}
Next we use \eqref{eq:ErrCond} and non-expansiveness of the proximity operator to bound
\begin{align}
\| {\bm x}^{r}-{\rm prox}_{\alpha^{r}f_{1}}[{\bm x}^{r}-\alpha^{r}\nabla f_{2}({\bm x}^{r})]\|^{2} ~&\leq ~2(\| {\bm x}^{r}-\bx^{r+1}\|^{2}+\| \bx^{r+1}-{\rm prox}_{\alpha^{r}f_{1}}[{\bm x}^{r}-\alpha^{r}\nabla f_{2}({\bm x}^{r})]\|^{2})\nonumber\\
&\leq ~ 2(\bar\alpha^2\kappa^{2}+1)\|\bx^{r+1}-\bx^{r}\|^{2}.\nonumber
\end{align}
This further implies
\begin{align}
F(\bx^{r+1})-F^{*}&\le c_{2}\left(\frac{2\tau(1+\bar\alpha^2\kappa^{2})}{\min\{ 1, {\underline\alpha}^{2}\}}+1\right)\| {\bm x}^{r}-{\bm x}^{r+1}\|^{2}\nonumber \\
&\le  \frac{c_{2}}{c_{1}}\left(\frac{2\tau(1+\bar\alpha^2\kappa^{2})}{\min\{ 1, {\underline\alpha}^{2}\}}+1\right)(F({\bm x}^{r})-F({\bm x}^{r+1})),\nonumber
\end{align}
where the last step is due to the sufficient decrease condition \eqref{eq:SuffD}. This establishes the Q-linear convergence of $F({\bm x}^{r})\to F^{\ast}$. In light of the sufficient decrease condition \eqref{eq:SuffD}, this further implies the R-linear convergence of $\{{\bm x}^{r}\}$ to an optimal solution. The proof of Theorem \ref{theorem:LinCon} is complete.
%\hfill {\bf Q.E.D.}\\
\end{proof}

%%%%%%%%%%%%%%%%%%%%%%%%%%%%%%%%%%%%%%%%%%%%%%%%%%%%%%%%%%%%%%%%%%%%%%%%%%%%%%%%%%%%%%%%%%%%%%%%%%%%%%%%%%%%%%%%%%%%%%%%%%%%%%%%%%%%%%%%%%%%%%%%
An immediate corollary of Theorem~\ref{theorem:LinCon} is that any APS algorithm with stepsizes generated according to \eqref{eq:stepsize} converges linearly, provided that the local error bound condition \eqref{eq:ErrB} holds.

\section{Block Coordinate Descent Method as an APS method}\label{sec:BCD}
%In this section we prove that coordinate method for non-smooth minimization is an APG, when the non-smooth part is separable over the variables. This section is still in progress.
Let ${\bm x}\in \mathbb{R}^n$ have the block form of ${\bm x}=({\bm x}_{1},{\bm x}_2,...,{\bm x}_{K})'$, where ${\bm x}_{k}\in {\mathbb{R}^{i_{k}} }$ and  $\sum_{k=1}^{K} i_{k}=n$.
Consider the minimization problem \eqref{eq:main} in which $f_{1}$ is separable over the blocks, namely,
\begin{align}
f_{1}(\bm x)=d_{1}(\bm x_{1})+\cdots+d_{K}(\bm x_{K}),\label{eq:seperable}
\end{align}
where $d_{k}, ~ k=1,\cdots,K$ are all convex (but not necessarily smooth) functions. Furthermore, ${\bf X}$ is a closed convex set in $\mathbb{R}^{n}$ which is in the following Cartesian product form
\begin{equation}\label{cartes}
{\bf X}={\bf X}_{1}\times{\bf X}_{2}\times...\times{\bf X}_{K},
\end{equation}
where ${\bf X}_{k}$ is a closed convex subset of ${\mathbb R}^{i_{k}}$. Note that the LASSO problem \eqref{eq:LASSO}, the group LASSO problem \eqref{eq:GLASSO} and the logestic group LASSO problem \eqref{eq:LogReg} admit the decomposition specified by \eqref{eq:seperable} and \eqref{cartes}.
%\[
%\min _{\bm x\in \mathbb{R}^{n}}  \frac{1}{2}\| A{\bm x}-b\|^{2}+\lambda\|{\bm x}\|_{1}
%\]
%and the gourp LASSO problem:\\
%\[
%\min_{\bm x \in \mathbb{R}^{n}}  \frac{1}{2}\| A{\bm x}-b\|^{2}+\sum_{k=1}^{K}  \lambda_{k}\| {\bm x}_{k}\|_{1}
%\]
%can be considered in this framework.
% both have the form of (\ref{newform}), i.e. the non-smooth part can be written as a sum of non-smooth functions of coordinates' blocks.\\

% The block coordinate descent method applies to the special case that ${\bf X}$ has the following form\\

%is a Cartesian product of closed boxes  in $\mathbb{R}^{n}$ in coordinates,i.e.\\
%with ${\bm I}_{1},{\bm I}_{2},...,{\bm I}_{n}$ closed intervals.\\
Consider the BCD method whereby after the ${r}$-th iteration, $r\ge 0$,  we choose an index $s\in \{ 1,2,...,K\}$ and compute the new iterate ${\bm x}^{r+1}=({\bm x}_{1}^{r+1},{\bm x}_{2}^{r+1},...,{\bm x}_{K}^{r+1})$ as follows
%At the iteration $r+1$, $r\ge 0$,  For simplicity we will focus on the most popular version of the method in which the coordinates are adjusted in a cyclic manner. Then the method generates a sequence of ${\bm x}^{0},{\bm x}^{1},{\bm x}^{2},...$ in ${\bf X}$ (with ${\bm x}^{0}$ given) according to the iteration\\
\begin{align}\label{blcoord}
{\bm x}_{s}^{r+1}&= {\rm argmin}_{{\bm x}_{s}\in {{\bf X}_{s}}} F({\bm x}_{1}^{r},{\bm x}_{2}^{r},..., {\bm x}_{s-1}^{r},{\bm x}_{s},{\bm x}_{s+1}^{r},...,{\bm x}_{K}^{r}) \nonumber \\
{\bm x}_{j}^{r+1}&={\bm x}_{j}^{r},~~j\neq s.
\end{align}
where $({\bm x}_{1}^{r},{\bm x}_{2}^{r},...,{\bm x}_{K}^{r})$ denotes the iterate at $r$-th iteration.
The blocks are chosen cyclically or essentially cyclically
%\footnote{Essentially cyclic means that there exists an integer $N$ such that after $N$ iterations every block has been updated at least once.}
to be updated at every iteration. The essentially cyclic update ensures that there exists an integer $N \ge K$ such that after this many iterations all the blocks are updated at least once.

It is known \cite{Tseng01} that the BCD algorithm with cyclic update or essentially cyclic update converges to the optimal solution for the set of non-smooth optimization problems that the non-smooth part is separable as defined in \eqref{eq:seperable}.
% To ensure that the minimum in (\ref{blcoord}) is attained uniquely, we need to assume that the non-smooth part of the objective function denoted by $f_{0}$  is component wise strongly convex,i.e. \\
%\begin{equation}
%f
%\end{equation}
To establish linear convergence of the BCD method, we need the assumption that the smooth part $f_{2}$ is strongly convex in each block, in the sense that there exists a scalar $\gamma \ge0$ such that, for any ${\bm x}=({\bm x}_{1},{\bm x}_{2},...,{\bm x}_{K})\in {\bf X}$ and any $s\in \{ 1, 2,..., K\}$,
\begin{equation}\label{strongconv}
f_{2}({\bm x}_{1}, {\bm x}_{2},...,{\bm x}_{s-1},{\bm x}_{s}+\Delta{\bm x}_{s},{\bm x}_{s+1},...,{\bm x}_{K})-f_{2}({\bm x})-\langle\nabla_{s} f_{2}({\bm x}), \Delta {\bm x}_{s}\rangle\ge \gamma \| \Delta{\bm x}_{s}\|^{2},
\end{equation}
for all feasible $\Delta{\bm x}_{s}\in \mathbb{R}^{i_{s}} $, where $\nabla_{s}f_{2}$ denotes the vector of partial derivatives of $f_{2}$ with respect to the $s$-th block. It is obvious that if the function $f_2$ is block coordinate-wise strongly convex, then the coordinate descent method satisfies the sufficient decrease condition \eqref{eq:SuffD} [cf. Proposition 3.4 in \cite{Luo93}].

For the applications described in Section \ref{seq:Introduction}, the coordinate-wise strong convexity of $f_2$ imposes a mild condition on the linear operator $\bm A$. For example, in LASSO problem \eqref{eq:LASSO}, if each column of $\bm A$ is non-zero and we consider each element in $\bx$ to be a block, then the problem is coordinate-wise strongly convex. Furthermore, for the group LASSO problem, $f_2$ is block coordinate-wise strongly convex if the columns of $\bm A$ corresponding to a block are linearly independent. A similar condition can be derived for the logestic group LASSO problem \eqref{eq:LogReg} to ensure  block coordinate-wise strong convexity. %Hence, under some mild conditions on the linear operator $\bm A$, the block coordinate descent algorithm satisfies the sufficient decrease condition in all of these applications.
%\begin{proposition}
%If the function $f_{2}$, is block coordinate-wise strongly convex, then the coordinate descent method satisfies the sufficient decrease condition \eqref{eq:SuffD}.
%\end{proposition}
%
%\begin{proposition}\label{prop:L1Strongly}
%In the $L_{1}$ minimization problem, if each column of $A$ is non-zero, then it is coordinate-wise strongly convex.
%\end{proposition}
%\begin{proposition}\label{prop:LASSOStrongly}
%The group LASSO problem is block coordinate-wise strongly convex iff the columns of $A$ which are corresponding to each block are linearly independent.
%\end{proposition}
%This assumption is fairly mild and it holds for the $L_{1}$ norm minimization problem and the Group LASSO minimization problem.\\
%%%%%%%%%%%%%%%%%%%%%%%%%%%%%%%%%%%%%%%%%%%%%%%%%%%%%%%
The following proposition shows that the block coordinate method for the $L_1$ norm minimization problem and the Group LASSO minimization problem is an APS method.

\begin{proposition}\label{prop:BCD} Under the assumptions \eqref{eq:seperable}, \eqref{cartes} and \eqref{strongconv}, the block coordinate descent method with cyclic update is an APS method satisfying \eqref{eq:APS}-\eqref{eq:ErrCond} with some constant $\kappa>0$. %an error term $\bm e$ which satisfies \eqref{eq:ErrCond} (cf. Proposition 3.4 in \cite{Luo93}).% In addition, under the block coordinate-wise strong convexity, it satisfies the condition \eqref{suffdec_def}.
%Let (\ref{cartes}) and the assumption (\ref{strongconv}) hold. Let ${\bm x}^{0}, {\bm x}^{1}, {\bm x}^{2},...$ be the sequence of iterates generated by the block coordinate descent method (\ref{blcoord}). Then, $\{{\bm x}^{r}\}$ together with the sequence of scalars $\{\alpha^{r} \}$ given by $\alpha^{r}=1$ for all $r$ and some sequence $\{{\bm e}^{r}\}$ in $\mathbb{R}^{n}$ satisfies (\ref{APG}) and (\ref{error}) with $\kappa=1+\sqrt{n}L$.
%%Kappa should be found and replaced above.%%%%%%%%%%%%%%%%
\end{proposition}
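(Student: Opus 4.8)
The plan is to show that one full cycle of BCD — updating blocks $1, 2, \ldots, K$ in sequence starting from $\bx^r$ — can be rewritten in the single proximal-splitting form \eqref{eq:APS} with step size $\alpha^r = 1$ (or some fixed $\alpha$) and an appropriately defined error vector $\be^r$, and then to verify the error bound \eqref{eq:ErrCond}. The key point is that within one cycle, when the algorithm updates block $s$, it uses the \emph{partial} gradient $\nabla_s f_2$ evaluated at the \emph{current} partially-updated point rather than at $\bx^r$; the discrepancy between these partial gradients, aggregated over all blocks, will constitute the error $\be^r$.

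First I would set up notation for the intermediate iterates inside one cycle: let $\bz^{(0)} = \bx^r$ and let $\bz^{(s)}$ be the point after block $s$ has been updated, so $\bz^{(K)} = \bx^{r+1}$, with $\bz^{(s)}$ and $\bz^{(s-1)}$ differing only in block $s$. By the separability \eqref{eq:seperable} and the Cartesian product structure \eqref{cartes}, the update \eqref{blcoord} of block $s$ is itself a proximity step: $\bz^{(s)}_s = \prox_{d_s}\big(\bz^{(s-1)}_s - \nabla_s f_2(\bz^{(s-1)})\big)$ restricted to $\bX_s$. The next step is to collect these $K$ block-wise proximity steps into one vector proximity step. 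Because $f_1$ is block-separable, $\prox_{f_1}$ acts block-wise, so $\bx^{r+1} = \prox_{f_1}\big(\bx^r - (\nabla f_2(\bx^r) + \be^r)\big)$, where the $s$-th block of $\be^r$ is defined to be $\nabla_s f_2(\bz^{(s-1)}) - \nabla_s f_2(\bx^r)$ (this is the only correction needed to make every block-step read as if the gradient had been taken at $\bx^r$). Verifying this identity block by block is routine given block-separability of both $f_1$ and $X$.

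The substantive step is the error bound \eqref{eq:ErrCond}: I must show $\|\be^r\| \le \kappa \|\bx^{r+1} - \bx^r\|$ for a $\kappa$ independent of $r$. Using the Lipschitz continuity of $\nabla f_2$ with constant $L$, each block satisfies $\|\nabla_s f_2(\bz^{(s-1)}) - \nabla_s f_2(\bx^r)\| \le L\|\bz^{(s-1)} - \bx^r\|$, and since $\bz^{(s-1)}$ and $\bx^r$ differ only in blocks $1, \ldots, s-1$, we have $\|\bz^{(s-1)} - \bx^r\|^2 = \sum_{j<s}\|\bx^{r+1}_j - \bx^r_j\|^2 \le \|\bx^{r+1} - \bx^r\|^2$. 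Summing the squared block contributions of $\be^r$ then gives $\|\be^r\|^2 \le K L^2 \|\bx^{r+1} - \bx^r\|^2$, so $\kappa = \sqrt{K}\,L$ works. Finally, $\liminf \alpha^r > 0$ is immediate since $\alpha^r$ is the fixed constant $1$. I would also note that the coordinate-wise strong convexity \eqref{strongconv} is what guarantees the sufficient decrease \eqref{eq:SuffD} (as remarked in the text, via Proposition~3.4 of \cite{Luo93}), so the hypotheses of Theorem~\ref{theorem:LinCon} are met.

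The main obstacle I anticipate is purely bookkeeping: making sure the "grouping" of the $K$ successive block proximity steps into one vector proximity step is genuinely valid — i.e., that $\prox_{\alpha f_1}$ with block-separable $f_1$ over a Cartesian-product constraint set really does decouple into independent block proximity operators, and that the argument fed to block $s$'s operator, namely $\bx^r_s - \nabla_s f_2(\bx^r) - \be^r_s = \bz^{(s-1)}_s - \nabla_s f_2(\bz^{(s-1)})$, matches exactly what BCD computes. Once that algebraic identity is pinned down, the norm estimate for $\be^r$ is short. One caveat worth stating explicitly: the argument as sketched uses step size $\alpha = 1$; if one wants a general fixed $\alpha$, the same construction works with $\be^r_s = \nabla_s f_2(\bz^{(s-1)}) - \nabla_s f_2(\bx^r)$ unchanged, and the bound becomes $\|\be^r\| \le \sqrt{K}L\|\bx^{r+1}-\bx^r\|$ regardless, so the proposition holds with $\kappa = \sqrt{K}L$.
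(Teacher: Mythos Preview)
Your proposal has a genuine gap at the very first step: the BCD update \eqref{blcoord} is \emph{exact} minimization over block $s$, not a prox-gradient step. Your claimed identity
\[
\bz_s^{(s)} = \prox_{d_s}\bigl(\bz_s^{(s-1)} - \nabla_s f_2(\bz^{(s-1)})\bigr)
\]
describes block coordinate \emph{gradient} descent (CGD), which the paper explicitly distinguishes from BCD. For exact BCD the optimality condition reads $0\in\partial d_s(\bz_s^{(s)})+\nabla_s f_2(\bz^{(s)})$, i.e.
\[
\bz_s^{(s)} = \prox_{d_s}\bigl(\bz_s^{(s)} - \nabla_s f_2(\bz^{(s)})\bigr),
\]
with the gradient evaluated at the \emph{new} point $\bz^{(s)}$ and the prox argument involving $\bz_s^{(s)}$ itself. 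Consequently, to write the full cycle as $\bx^{r+1}=\prox_{f_1}(\bx^r-\nabla f_2(\bx^r)-\be^r)$, the $s$-th block of $\be^r$ must absorb \emph{two} discrepancies:
\[
\be_s^r = \bigl(\nabla_s f_2(\bz^{(s)})-\nabla_s f_2(\bx^r)\bigr)\; -\;\bigl(\bz_s^{(s)}-\bx_s^r\bigr),
\]
not just the gradient difference you wrote. The extra term $\bz_s^{(s)}-\bx_s^r$ is exactly what corrects for the self-referential nature of the exact-minimization optimality condition.

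This is precisely how the paper proceeds: it defines $\be_k^s = \bx_k^{r'+k}-\bx_k^{r'}+\nabla_k f_2(\bx^{r'})-\nabla_k f_2(\bx^{r'+k})$, bounds each block by $(L+1)\|\bx^{(s+1)K}-\bx^{sK}\|$ (the ``$+1$'' coming from the extra term you are missing), and obtains $\kappa=K(L+1)$. Your bound $\kappa=\sqrt{K}L$ is therefore not valid for exact BCD, though your argument would be essentially correct for CGD. Once you insert the missing term, the rest of your outline---block separability of $\prox_{f_1}$, Lipschitz bound, and the observation that intermediate iterates differ from $\bx^r$ only in already-updated blocks---goes through exactly as you describe.
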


\begin{proof}
Let us define two different iteration counters. The outer iteration index $s$ is a counter for the number of updating cycles of the BCD algorithm, and the inner iteration index $k$ corresponds to the variable block being updated in a given cycle. Thus, at iteration $r=sK+k$ (with $1\leq k \leq K$), the $k$-th variable block is updated in the $s$-th cycle. Throughout the proof, the notation $\bm x^{r}$ means the $r$-th iterate of the BCD algorithm, and  $\bm x^{r}_{k}$ represents the $k$-th block of $r$-th iterate.

For simplicity,
% [[explain why you can make this simplification]]
let us assume that there are no constraints. This assumption is not restricting as one can always add the indicator functions of the constraining sets to the objective. Since the feasible set is assumed to have a special structure as in \eqref{cartes}, the separability of the non-smooth objective component will still be preserved after this change.

The optimality condition at the $r$-th iteration for BCD method is,
\begin{align}
\bm{g}+\nabla_{k}f_{2}(\bm x^{r})=\bm 0,
\end{align}
for some $\bm{g}$ in $\partial d_{k}(\bm x^{r}_{k})$ (Note that we assumed $f_{1}(\bx)=d_{1}(\bx_{1})+\cdots+d_{K}(\bx_{K})$). Now in each fixed cycle $s$,  define $r'=Ks$ and the error vector $\bm e^{s}=(\bm e^{s}_{1},\cdots, \bm e^{s}_{K})$, as follows
\begin{align}
\bm e_{k}^{s} =  \bm x_{k}^{r'+k}-\bm x_{k}^{r'}+\nabla_{k}f_{2}(\bm x^{r'})-\nabla_{k}f_{2}(\bm x^{r'+k}), ~\forall~k=1,\cdots, K.
\end{align}
Then, it is obvious that $\bm x^{r'+K}$ generated by BCD can also be derived from the following update rule
\begin{align}
\bm x^{(s+1)K}=\mbox{prox}_{f_{1}}(\bm x^{sK}-\nabla f_{2}(\bm x^{sK})+\bm e^{s}).
\end{align}
Now we can show that $\|\bm e^{s}\|\leq \kappa \|\bm x^{(s+1)K}-\bm x^{sK}\|$ for some $\kappa>0$. Since $f_{2}$ has Lipschitz continuous gradient, it follows that
\begin{align}
\|\bm e^{s}_{k}\| &~\leq \|\bm x_{k}^{r'+k}-\bm x_{k}^{r'} \|+\|\nabla_{k}f_{2}(\bm x^{r'})-\nabla_{k}f_{2}(\bm x^{r'+k})\|\nonumber\\
&\leq ~\|\bm x_{k}^{r'+k}-\bm x_{k}^{r'}\|+L\|\bm x^{r'}-\bm x^{r'+k}\|\nonumber\\
&\leq ~(L+1)\|\bm x^{r'+k}-\bm x^{r'}\|~\leq~ (L+1)\|\bm x^{r'+K}-\bm x^{r'}\|~=~(L+1)\|\bm x^{(s+1)K}-\bm x^{sK}\|,  \nonumber
\end{align}
where the second step is due to the Lipschitz condition on $\nabla f_{2}$ and the last inequality is due to the block coordinate-wise update in the algorithm. This further implies that
\[
\|\bm e^s\|\le K(L+1)\|\bm x^{(s+1)K}-\bm x^{sK}\|,
\]
so that the condition \eqref{eq:ErrCond} holds with $\kappa=K(L+1)$.
\end{proof}

Note that a similar argument can be used to show that the BCD algorithm with essentially cyclic update also lies in the APS framework, with an error term $\bm e$ satisfying \eqref{eq:ErrCond}.

The following result is a direct consequence of Corollary \ref{coro:EB} and Proposition \ref{prop:BCD}.
\begin{proposition}\label{prop:BCDlin} The BCD algorithm (with cyclic or essentially cyclic update) generates a sequence of iterates that converges R-linearly to a solution in $X^{*}$ for the LASSO problem \eqref{eq:LASSO}, the Group LASSO problem \eqref{eq:GLASSO} and the logistic Group LASSO problem \eqref{eq:LogReg}, provided that the objective function is block coordinate-wise strongly convex.
\end{proposition}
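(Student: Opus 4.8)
The plan is to obtain Proposition~\ref{prop:BCDlin} by combining three facts that are already in hand — the APS reformulation of cyclic BCD (Proposition~\ref{prop:BCD}), the sufficient decrease coming from block coordinate-wise strong convexity, and the local error bound of Corollary~\ref{coro:EB} — feeding them into Theorem~\ref{theorem:LinCon}, and then upgrading the conclusion from the per-cycle subsequence to the full iterate sequence.

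First I would record that each of the problems \eqref{eq:LASSO}, \eqref{eq:GLASSO}, \eqref{eq:LogReg} satisfies the structural hypotheses of Section~\ref{sec:BCD}: here $X=\mathbb{R}^n=\mathbb{R}^{i_1}\times\cdots\times\mathbb{R}^{i_K}$ has the Cartesian form \eqref{cartes}, and $f_1$ (the $\ell_1$ norm, a sum of group norms, or such a sum plus $\lambda\|\bm x\|_1$) is separable across the chosen blocks as in \eqref{eq:seperable}. Assuming the objective is block coordinate-wise strongly convex, i.e. \eqref{strongconv} holds with some $\gamma>0$ (which, as discussed after \eqref{strongconv}, is a mild full-column-rank-type condition on $\bm A$ for the relevant block structure), Proposition~\ref{prop:BCD} shows that the cyclic BCD iteration, viewed one cycle at a time through the outer counter $s$ with $\bm x^{(s+1)K}=\mathrm{prox}_{f_1}(\bm x^{sK}-\nabla f_2(\bm x^{sK})+\bm e^s)$, is an APS method of the form \eqref{eq:APS} with stepsize $\alpha^s\equiv1$ and with $\|\bm e^s\|\le\kappa\|\bm x^{(s+1)K}-\bm x^{sK}\|$ for $\kappa=K(L+1)$, so \eqref{eq:ErrCond} holds.

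Next I would verify the sufficient decrease condition \eqref{eq:SuffD} for this outer iteration. Within cycle $s$ (write $r'=sK$), the inner step that updates block $k$ changes only that block and its pre-update value is $\bm x^{r'}_k$, since block $k$ is untouched in the earlier inner steps of the cycle; because the $k$-th block subproblem objective is $\gamma$-strongly convex by \eqref{strongconv} (equivalently, by Proposition~3.4 of \cite{Luo93}), minimality of $\bm x^{r'+k}_k$ gives $F(\bm x^{r'+k-1})-F(\bm x^{r'+k})\ge\gamma\|\bm x^{r'+k}_k-\bm x^{r'}_k\|^2$. Telescoping over $k=1,\dots,K$ and using $\|\bm x^{(s+1)K}-\bm x^{sK}\|^2=\sum_{k=1}^K\|\bm x^{r'+k}_k-\bm x^{r'}_k\|^2$ (each block's final value in the cycle is $\bm x^{r'+k}_k$) yields $F(\bm x^{sK})-F(\bm x^{(s+1)K})\ge\gamma\|\bm x^{(s+1)K}-\bm x^{sK}\|^2$, which is \eqref{eq:SuffD} with $c_1=\gamma$ for the outer iteration. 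Since Corollary~\ref{coro:EB} supplies the local error bound \eqref{eq:ErrB} for all three problems, Theorem~\ref{theorem:LinCon} now applies to the outer APS iteration and gives that $\{\bm x^{sK}\}$ converges R-linearly to some $\bm x^\ast\in X^\ast$.

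Finally I would lift R-linear convergence of the subsequence $\{\bm x^{sK}\}$ to the whole sequence $\{\bm x^r\}$. For any inner index $k$, $\|\bm x^{sK+k}-\bm x^{sK}\|^2=\sum_{j=1}^k\|\bm x^{sK+j}_j-\bm x^{sK}_j\|^2\le\|\bm x^{(s+1)K}-\bm x^{sK}\|^2$, so every iterate occurring in cycle $s$ lies within $\|\bm x^{(s+1)K}-\bm x^{sK}\|$ of $\bm x^{sK}$; since \eqref{eq:SuffD} together with the Q-linear convergence $F(\bm x^{sK})\to F^\ast$ forces $\|\bm x^{(s+1)K}-\bm x^{sK}\|$ to decay geometrically in $s$, and $\|\bm x^{sK}-\bm x^\ast\|$ decays geometrically as well, a triangle-inequality estimate shows $\|\bm x^r-\bm x^\ast\|$ decays geometrically in $r$. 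The essentially cyclic case is handled identically, replacing the cycle length $K$ throughout by the constant $N\ge K$ of the essentially cyclic rule, as noted after Proposition~\ref{prop:BCD}. The only genuinely delicate point is this last lifting step — one must make sure the translation between the inner BCD counter $r$ and the outer APS counter $s$ does not introduce a factor growing with $r$ into the rate; the remaining steps are direct invocations of Proposition~\ref{prop:BCD}, Corollary~\ref{coro:EB}, and Theorem~\ref{theorem:LinCon}.
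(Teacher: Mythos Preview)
Your proposal is correct and follows essentially the same approach as the paper, which simply states that Proposition~\ref{prop:BCDlin} is a direct consequence of Corollary~\ref{coro:EB} and Proposition~\ref{prop:BCD} (together with the block coordinate-wise strong convexity yielding sufficient decrease, as noted after \eqref{strongconv}). You supply more detail than the paper does, in particular the explicit telescoping for \eqref{eq:SuffD} and the lifting from the per-cycle subsequence $\{\bm x^{sK}\}$ to the full sequence $\{\bm x^{r}\}$; the paper omits both of these and leaves the conclusion at the level of a one-line ``direct consequence.''
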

%\subsection{Related Works}

To our knowledge this is the first result which shows the linear convergence rate of the BCD algorithm under the local error bound condition. In a closely related work,  the authors of \cite{Tseng09CGD,necoara2013dist} established the linear convergence of the (Block) Coordinate Gradient Descent (abbreviated as CGD) algorithm for solving problem \eqref{eq:main} under the assumptions \eqref{eq:seperable}-\eqref{cartes} and a local error bound condition. The BCD and CGD algorithms both exploit block coordinate-wise updates to solve the problem \eqref{eq:main}. However, unlike BCD which solves the exact subproblem in each iteration, CGD approximates the smooth component $f_{2}$ by a strictly convex quadratic function. Therefore, the analysis given in \cite{Tseng09CGD,necoara2013dist} does not imply the linear convergence of BCD. Another relevant work is \cite{Meisam13-BSUM,mairal2014} which provides the convergence analysis for a general class of inexact BCD methods. However, their analysis does not provide convergence rate guarantees for the BCD algorithm when applied to problem \eqref{eq:main}. Finally, the readers can refer to \cite{hong2013-BCD} for a unified iteration complexity analysis for a family of BCD-type algorithms.

\section{Concluding Remarks}
In this paper we have introduced the class of approximate proximal splitting methods and established its linear convergence under some conditions (sufficient decrease and local error bound).  This general result implies the linear convergence of the BCD algorithm for a class of non-smooth convex problems. As a future work, it will be interesting to generalize the proofs of linear convergence for the APS algorithms to the problems with nuclear norm regularization \cite{Candes2010}, \cite{Yun2009}.

%[[Add a reference here.]]

%
%\begin{proposition}[see \cite{Haibin12} and Theorem \ref{theorem:errorbound}] The error bound condition \eqref{eq:ErrB} holds for the $L_{1}$ minimization and group LASSO problem when $X=\mathbb{R}^{n}$ and $f_{2}$ is a composite function.
%\end{proposition}
% \begin{corollary} The BCD algorithm, generates a sequence which converges R-linearly to the optimal solution of $L_{1}$ minimization problem when $X=\mathbb{R}^{n}$when $X=\mathbb{R}^{n}$ and $f_{2}$ is a composite function satisfying the condition in Proposition \ref{prop:L1Strongly}.
% \end{corollary}
%  \begin{corollary} The BCD algorithm, generates a sequence which converges R-linearly to the optimal solution of group LASSO problem when $X=\mathbb{R}^{n}$when $X=\mathbb{R}^{n}$ and $f_{2}$ is a composite function satisfying the condition in Proposition \ref{prop:LASSOStrongly}.
% \end{corollary}
 %%%%%%%%%%%%%%%%%%%%%%%%%%%%%%%%%%%%%%%%%%%%%%%%%%%%%%%%%%%%%%%%%%%%%%%%%%%%%%%%%%%%%%%%%%%%%%%%%%%%%%%%%%%%%%%%%%%%%%%%%%%%%%%%%%%%%%%%%%%%%%%%
%\bibliographystyle{IEEEbib}
\bibliographystyle{siam}
\bibliography{ref}
\vspace{3 mm}
\newpage
\appendix

\noindent\textbf{Proof of Lemma \ref{lemma:lemma1}}\\
%The claim of the Lemma consists of two parts:
%\begin{enumerate}
%\item The function $\alpha\|\tilde{\nabla}f(\bx,\alpha)\|$ is monotonically increasing with $\alpha>0$.
%\item The function $\|\tilde{\nabla}f(\bx,\alpha)\|$ is monotonically decreasing with $\alpha>0$.
%\end{enumerate}
We define $h(\alpha)=\|\tilde{\nabla}f(\bx,\alpha)\|$ for any $\alpha > 0$. Hence, the first part of the lemma is to show that $\alpha h(\alpha)$ is increasing with $\alpha$. \\
%We start with the first claim. Define the function $h(\alpha)$ as
%\begin{equation}
%h(\alpha)=\|\tilde{\nabla}f(\bx,\alpha)\|=\frac{1}{\alpha}\|\bx-{\rm prox}_{\alpha f_{1}}(\bx-\alpha \nabla f_{2}(\bx))\|,
%\end{equation}
%where the second equality is due to the definition of the proximal gradient. The first claim is that $\alpha h(\alpha)$ is increasing with $\alpha$.\\
From the definition of the proximity operator \eqref{eq:prox} and the proximal gradient \eqref{eq:PSgrad}, we have
\[
\alpha h(\alpha) =\left \|\bx - \arg \min_{\by\in X} \left\{\alpha f_{1}(\by)+\frac{1}{2}||\by-(\bx-\alpha \nabla f_{2}(\bx))||^{2}\right\}\right \|.
\]
By the change of variable $\bz \triangleq \by-\bx$, we have
\begin{align}
\alpha h(\alpha)&=\left\| \arg \min_{\bz \in X^{'}}\left\{\alpha f_{1}(\bx+\bz)+\frac{1}{2}\|\bz+\alpha \nabla f_{2}(\bx) \|^{2}\right\} \right \| \label{eq2:lemma43} \nonumber \\
&=\left\| \arg \min_{\bz \in X^{'}}\left\{\alpha f_{1}(\bx+\bz)+\frac{1}{2}\| \bz \|^{2}+\alpha \bz^{T} \nabla f_{2} (\bx) \right \}\right \|,
\nonumber\end{align}
where $X^{'}=\{\bz\mid\bz = \by - \bx\; \text{for some}\;\by \in X  \}$.
% One can replace the quadratic term $\|\bz+\alpha \nabla f_{2}(\bx) \|^{2}$ in the objective function by its expansion
%\[
%\frac{1}{2}\|\bz+\alpha \nabla f_{2}(\bx) \|^{2}=\frac{1}{2} \|\bz\|^{2}+\alpha \bz^{T}\nabla f_{2}(\bx) + \frac{\alpha^{2}}{2} \|\nabla f_{2}(\bx)\|^{2}.
%\]
%The last term of the expansion is independent of $\bz$ and therefore can be removed from the optimization objective in \eqref{eq2:lemma43}.
Then, we have
\begin{equation}\label{eq3:lemma43}
\alpha h(\alpha) = \left\|\arg \min_{\bz\in X^{'}} \left\{ \alpha g(\bz) + \frac{1}{2} \| \bz \|^{2} \right\}\right \|,
\end{equation}
where $g(\bz)=f_{1}(\bx+\bz)+ \bz^{T} \nabla f_{2}(\bx)$ is a (non-smooth) convex function. Our goal now is to show that if $0<\alpha_{1}<\alpha_{2}$, then
\[
\alpha_{1}h(\alpha_{1})= \|\bz^{\ast}(\alpha_{1})\|\le \|\bz^{\ast}(\alpha_{2})\|=\alpha_{2} h(\alpha_{2})
\]
where $\bz^{\ast}(\alpha)$ denotes the optimal solution of
\begin{equation}\label{eq4:lemma43}
\min_{\bz \in X^{'}} \left\{ \alpha g(\bz) + \frac{1}{2} \| \bz \|^{2} \right\}.
\end{equation}
The optimality of $\bz^{\ast}(\alpha_{1})$ implies that
\[
g(\bz^{\ast}(\alpha_{1}))+\frac{1}{2 \alpha_{1}} \|\bz^{\ast}(\alpha_{1})\|^{2}  \le g(\bz)+\frac{1}{2 \alpha_{1}} \|\bz\|^{2}, \;\; \forall\ \bz \in X^{'}
\]
In particular, when $\bz$ is set to $\bz^{\ast}(\alpha_{2})$, we have
\begin{equation}\nonumber
g(\bz^{\ast}(\alpha_{1}))+\frac{1}{2 \alpha_{1}} \|\bz^{\ast}(\alpha_{1})\|^{2}  \le g(\bz^{\ast}(\alpha_{2}))+\frac{1}{2 \alpha_{1}} \|\bz^{\ast}(\alpha_{2})\|^{2}.
\end{equation}
Similarly, the optimality of $\bz^{\ast}(\alpha_{2})$ implies that
\begin{equation}\nonumber
g(\bz^{\ast}(\alpha_{2}))+\frac{1}{2 \alpha_{2}} \|\bz^{\ast}(\alpha_{2})\|^{2}  \le g(\bz^{\ast}(\alpha_{1}))+\frac{1}{2 \alpha_{2}} \|\bz^{\ast}(\alpha_{1})\|^{2}.
\end{equation}
Adding up the last two inequalities yields
\[
\left(\frac{\alpha_{2}-\alpha_{1}}{2\alpha_{1}\alpha_{2}} \right) \| \bz^{\ast}(\alpha_{1}) \| \le \left( \frac{\alpha_{2}-\alpha_{1}}{2\alpha_{1}\alpha_{2}}\right) \| \bz^{\ast}(\alpha_{2}) \|.
\]
%which completes the proof of the first part. Note that the convexity of $f_{1}$ (or equivalently $g$) was not used in this part.\\
Since $0<\alpha_{1}<\alpha_{2}$, the above inequality implies that $\| \bz^{\ast}(\alpha_{1}) \| \le \| \bz^{\ast}(\alpha_{2}) \| $.  Note that the convexity of $f_{1}$ (or equivalently $g$) was not used in this part.

Next we prove the second part of the lemma which states that $h(\alpha)$ is monotonically decreasing with $\alpha$.
Introducing the new variable $\bu \triangleq \frac{1}{\alpha}\bz$, the equation \eqref{eq3:lemma43} can be rewritten as
\[
h(\alpha) =\left \| \arg \min_{\bu \in X^{''} } \left\{ \alpha g(\alpha \bu) + \frac{1}{2} \alpha^{2} \| \bu\|^{2}  \right \} \right \|
\]
or equivalently,
\begin{equation}\nonumber\label{eq41:lemma43}
h(\alpha) =\left \| \arg \min_{\bu \in X^{''}} \left\{ \frac{1}{\alpha} g(\alpha \bu) + \frac{1}{2} \| \bu\|^{2}  \right \} \right \|
\end{equation}
where $X^{''}=\{\bu\mid\bu = \frac{1}{\alpha} (\by-\bx), \; \text{for some} \; \by \in X\}$. We define $\bu^{\ast}(\alpha)$ as the optimal solution of
\begin{equation}\label{eq41:lemma44}
h(\alpha) = \min_{\bu \in X^{''}} \left\{ \frac{1}{\alpha} g(\alpha \bu) + \frac{1}{2} \| \bu\|^{2}  \right\}.
\end{equation}
It suffices to show that
\[
h(\alpha_{1}) = \| \bu^{\ast}(\alpha_{1}) \| \ge \| \bu^{\ast}(\alpha_{2}) \| =h(\alpha_{2}) ,
\]
 for $0<\alpha_{1}<\alpha_{2}$. The first order optimality condition of \eqref{eq41:lemma44} at $\bu^{\ast}(\alpha)$ implies
\begin{equation}
\bv+\bu^{\ast}(\alpha)=0, \;\; \text{for some }\bv \in \partial g(\alpha \bu^{\ast} (\alpha)),\label{eq42:lemma43}
\end{equation}
where $\partial g(\alpha \bu^{\ast} (\alpha) ) $ is the sub-differential set of the function $g$ at the point $\alpha \bu^{\ast}(\alpha)$. Rewriting \eqref{eq42:lemma43} for $\bu^{\ast}(\alpha_{1})$ and $\bu^{\ast}(\alpha_{2})$, we obtain
\begin{align}
\bv_{1}+\bu^{\ast}(\alpha_{1})&=0, \;\; \text{for some }\bv_{1} \in \partial g(\alpha_{1} \bu^{\ast} (\alpha_{1})), \label{eq5:lemma43}\\
\bv_{2}+\bu^{\ast}(\alpha_{2})&=0, \;\; \text{for some }\bv_{2} \in \partial g(\alpha_{2} \bu^{\ast} (\alpha_{2})).\label{eq6:lemma43}
\end{align}
Since $g$ is a convex function, $\partial g$ is a monotone mapping \cite{roc70}. Therefore, $\bv_{1} \in \partial g(\alpha_{1} \bu^{\ast} (\alpha_{1}))$ and $\bv_{2} \in \partial g(\alpha_{2} \bu^{\ast} (\alpha_{2}))$ imply
\begin{equation}\label{eq7:lemma43}
\langle \alpha_{1}\bu^{\ast}(\alpha_{1})-\alpha_{2}\bu^{\ast}(\alpha_{2}),\bv_{1}-\bv_{2} \rangle \ge 0.
\end{equation}
Combining \eqref{eq7:lemma43} with \eqref{eq5:lemma43} and \eqref{eq6:lemma43} implies that
\[
\langle \alpha_{1}\bu^{\ast}(\alpha_{1})-\alpha_{2}\bu^{\ast}(\alpha_{2}), \bu^{\ast}(\alpha_{2})-\bu^{\ast}(\alpha_{1})\rangle \ge 0.
\]
%This inequality can be expanded to obtain
%\[
%\alpha_{1}\|\bu^{\ast}(\alpha_{1})\|^{2}+\alpha_{2}\|\bu^{\ast}(\alpha_{2})\|^{2} \le (\alpha_{1}+\alpha_{2}) \langle \bu^{\ast}(\alpha_{1}), \bu^{\ast}(\alpha_{2}) \rangle
%\]
Define $\bd=\bu^{\ast}(\alpha_{2})-\bu^{\ast}(\alpha_{2})$. Then the above inequality can be written as
\[
%\alpha_{1}\|\bu^{\ast}(\alpha_{1})\|^{2}+\alpha_{2}\|\bu^{\ast}(\alpha_{1})+\bd \|^{2} \le (\alpha_{1}+\alpha_{2}) \langle \bu^{\ast}(\alpha_{1}), \bu^{\ast}(\alpha_{1})+\bd \rangle,
\langle\, (\alpha_{1} - \alpha_{2})\bu^{\ast} (\alpha_{1}) -\alpha_{2} \bd , \bd\, \rangle \ge 0
\]
which yields
\[
\alpha_{2} \|\bd\|^{2} \le (\alpha_{1}-\alpha_{2})\langle \bu^{\ast}(\alpha_{1}), \bd \rangle.
\]
Since $\alpha_{1}-\alpha_{2}<0$, we have
\begin{equation}\label{eq8:lemma43}
\langle \bu^{\ast}(\alpha_{1}) , \bd \rangle \le \frac{\alpha_{2}}{\alpha_{1}-\alpha_{2}} \| \bd \|^{2}.
\end{equation}
Now we can write that
\begin{align}
\|\bu^{\ast}(\alpha_{2})\|^{2}&= \|\bu^{\ast}(\alpha_{1})\|^{2} + 2 \langle \bu^{\ast}(\alpha_{1}) ,\bd \rangle + \|\bd\|^{2} \nonumber\\
&\le \| \bu^{\ast}(\alpha_{1}) \|^{2} + \frac{2\alpha_{2}}{\alpha_{1}-\alpha_{2}} \|\bd\|^{2}  +\|\bd\|^{2} \nonumber \\
&= \| \bu^{\ast}(\alpha_{1}) \|^{2}+ \frac{\alpha_{1}+\alpha_{2}}{\alpha_{1}-\alpha_{2}}\|\bd\|^{2} \nonumber \\
&\le \| \bu^{\ast}(\alpha_{1}) \|^{2}, \nonumber
\end{align}
where the first inequality is due to \eqref{eq8:lemma43} and the second inequality is due to the fact that $0<\alpha_{1}<\alpha_{2}$. This completes the proof.

%Multiplying both sides of this inequality by two and adding the constant term $\| \bd \|^{2}$ to both sides yield
%\[
%\| \bd \|^{2} + 2 \langle \bu^{\ast}(\alpha_{1}) , \bd \rangle \le\frac{\alpha_{1}+\alpha_{2}}{\alpha_{1}-\alpha_{2}} \| \bd \|^{2}.
%\]
%Since $\alpha_{1}-\alpha_{2}<0$, we have $\| \bd \|^{2} + 2 \langle \bu^{\ast}(\alpha_{1}) , \bd \rangle \le 0$, or equivalently
%\[
%\|\bu^{\ast}(\alpha_{1})\|^{2} \ge \|\bu^{\ast}(\alpha_{1}) \|^{2} + \| \bd \|^{2} + 2 \langle \bu^{\ast}(\alpha_{1}) , \bd \rangle
%\]
%Notice that the right hand side of this inequality is equal to $\| \bu^{\ast}(\alpha_{2}) \|^{2}$. Therefore, we have shown that
%\[
%\|\bu^{\ast}(\alpha_{1})\|^{2} \ge \|\bu^{\ast}(\alpha_{2})\|^{2},
%\]
%which completes the proof.
% Notice that the convexity assumption of the function $g$, which results from the convexity of $f_{1}$, was used in proving the second part of this lemma, in equation \eqref{eq7:lemma43}.

\end{document}